\numberwithin{equation}{section}
\title{\bf An explicit prime geodesic theorem for discrete tori and the hypergeometric functions}
\author{Yoshinori YAMASAKI\thanks{Partially supported by Grant-in-Aid for Scientific Research (C) No. 15K04785.}
} 
\date{\today}
\theoremstyle{theorem}
\theoremstyle{definition}
\newtheorem*{multiproclaim}{\variable@name}
\theoremstyle{plain}
\newtheorem{thm}{Theorem}[section]
\newtheorem{prop}[thm]{Proposition}
\newtheorem{lem}[thm]{Lemma}
\newtheorem{conj}[thm]{Conjecture}
\theoremstyle{definition}
\newtheorem{example}[thm]{Example}
\newtheorem{remark}[thm]{Remark}
\newenvironment{MSC}{%
\smallbreak
\noindent \textbf{2010\ Mathematics Subject Classification\,:}}
\newenvironment{keywords}{%
\noindent\textbf{Key words and phrases\,:}\itshape}
\newcommand{\Spec}{\mathrm{Spec}\,}
\renewcommand{\Re}{\mathrm{Re}\,}
\newcommand{\DT}{\mathrm{DT}}
\newcommand{\RT}{\mathrm{RT}}
\begin{document}

\setlength{\baselineskip}{15pt}
\maketitle

\begin{abstract}
 The discrete tori are graph analogues of the real tori, 
 which are defined by the Cayley graphs of a finite product of finite cyclic groups. 
 In this paper, using the theory of the heat kernel on the discrete tori
 established by Chinta, Jorgenson and Karlsson, 
 we derive an explicit prime geodesic theorem for the discrete tori, which is not an asymptotic formula.
 To describe the formula, we need generalizations of the classical Jacobi polynomials,
 which are defined by the Lauricella multivariable hypergeometric function of type $C$.
\begin{MSC}
 {\it Primary}
 11M36 
 {\it Secondary} 
 05C30,
 33C65
\end{MSC} 
\begin{keywords}
 discrete tori, 
 prime geodesic theorem, 
 heat kernels on graphs, 
 Lauricella hypergeometric functions,
 Jacobi polynomials.
\end{keywords}
\end{abstract}

\section{Introduction and the main results}

 Let $X=(V,E)$ be a graph with $V$ and $E$ being respectively the sets of all vertices and edges of $X$. 
 Throughout of the present paper, we always assume that graphs satisfy the standard conditions,
 that is, they are finite, undirected, simple, connected and regular.
 Assume that $X$ is a $(q+1)$-regular graph. 
 It is well known that, as an analogue of the prime number theorem, 
 we have so-called the prime geodesic theorem for $X$.
 We here briefly review it (more precisely, see e.g., \cite{Terras2011}).
 For a positive integer $n$,
 let $N_{X}(n)$ be the number of all reduced cycles $C$ in $X$ with $l(C)=n$ where $l(C)$ is the length of $C$
 and $\pi_{X}(n)$ the number of all equivalence classes $[P]$ of the prime reduced cycles $P$ in $X$ with $l(P)=n$.
 Here, the cycle $C$ is called reduced if $C^2$ has no backtrack and
 is prime if it can not be expressed as $C=D^f$ for any cycle $D$ and $f\ge 2$.
 It is easy to see that $N_X(n)=\sum_{b\,|\,n}b\pi_{X}(b)$ and hence
 $\pi_X(n)=\frac{1}{n}\sum_{b\,|\,n}\mu(\frac{n}{b})N_{X}(b)$ by the M\"obius inversion formula.
 Let $W$ be the edge adjacency matrix of $X$, which is a square matrix of size $2|E|$, 
 and $\Spec(W)$ the set of all eigenvalues of $W$ with multiplicities.
 Then, it holds that  
\begin{equation}
\label{for:PGTforN}
 N_X(n)
=\sum_{\lambda\in\Spec(W)}\lambda^n
\end{equation}
 and hence 
\begin{equation}
\label{for:PGTforPi}
 \pi_X(n)
\sim \delta_X \frac{q^{n}}{n} \quad (n\to\infty),
\end{equation}
 if $\delta_X\mid n$ (and $\pi_X(n)=0$ otherwise).
 Here, $\delta_X$ is the greatest common divisor of all lengths of prime reduced cycles in $X$.
 Actually, we can obtain \eqref{for:PGTforPi} from \eqref{for:PGTforN} 
 by using the result obtained by Kotani and Sunada \cite{KotaniSunada2000},
 which asserts that the eigenvalues of $W$ with the largest absolute value
 are given by $\lambda=qe^{\frac{2\pi ia}{\delta_X}}$ for $a=1,2,\ldots,\delta_X$.
 As the prime number theorem is obtained via the Riemann zeta function, 
 these formulas are also related to the zeta function $Z_X(u)$, called the Ihara zeta function \cite{Ihara1966},
 associated with $X$ defined by the following Euler product;  
 \[
 Z_{X}(u)
=\prod_{[P]}\bigl(1-u^{l(P)}\bigr)^{-1} \quad (|u|<q^{-1}).
\]
 Here, in the product, $[P]$ runs over all equivalence classes of the prime reduced cycles in $X$.
 In fact, since we have from the definition 
\begin{equation}
\label{for:generating_funcition_of_N}
 u\frac{d}{du}\log Z_{X}(u)=\sum^{\infty}_{n=1}N_X(n)u^n,
\end{equation}
 one obtains \eqref{for:PGTforN} by the following determinant expression of $Z_X(u)$ with respect to $W$;
\[
 Z_X(u)^{-1}=\det\bigl(I_{|E|}-uW\bigr).
\]
 Here, for a positive integer $m$, $I_{m}$ is the identity matrix of size $m$.
 Remark that we will encounter another type of the determinant expression of $Z_X(u)$
 (see \S~\ref{subsec:Ihara_zeta}).

 The aim of this paper is to establish an explicit prime geodesic theorem,
 which are {\it not} an asymptotic formula, for the discrete tori.
 Here, for $M=M^{(d)}=(m_1,\ldots,m_d)\in(\mathbb{Z}_{\ge 3})^d$,
 the discrete torus $\DT^{(d)}_{M}$ of dimension $d$ is defined by
 the Cayley graph of the group $\prod^{d}_{j=1}\mathbb{Z}/m_j\mathbb{Z}$
 associated with the generating set $\{\pm\delta_1,\ldots,\pm \delta_d\}$
 with $\delta_j=(0,\ldots,0,1,0,\ldots,0)\in \prod^{d}_{j=1}\mathbb{Z}/m_j\mathbb{Z}$.
 This is a $2d$-regular graph having $\Vert M\Vert=m_1\cdots m_d$ vertices and $d\Vert M\Vert$ edges.
 Because of the simplicity of the structure of the graph,
 their harmonic analysis are well studied.
 In particular, very recently,
 there are various results on the number of spanning trees, which is sometimes called a {\it complexity},
 of the discrete tori and their degenerated ones
 by establishing the theory of the heat kernel on the graphs
 \cite{ChintaJorgensonKarlsson2010,ChintaJorgensonKarlsson2012,ChintaJorgensonKarlsson2015,Louis2015a,Louis2015b}.

 To state the result, we need a generalization of the Jacobi polynomial:
 For $\alpha=(\alpha_1,\ldots,\alpha_d)\in\mathbb{R}^n$, $\beta\in\mathbb{R}$ and $k\in\mathbb{Z}_{\ge 0}$, define 
\begin{equation}
\label{def:geneJacobi}
 P^{(\alpha,\beta)}_{d,k}(x)
=\frac{(|\alpha|+1)_k}{k!}
 F^{(d)}_C\left(
\begin{array}{c}
 -k,k+|\alpha|+\beta+1\\[3pt] 
 \alpha_1+1,\ldots,\alpha_d+1
\end{array}
;\,\frac{1-x}{2},\ldots,\frac{1-x}{2}
\right),
\end{equation}
 where $(a)_k=\frac{\Gamma(a+k)}{\Gamma(a)}$ is the Pochhammer symbol with $\Gamma(x)$ being the gamma function,
 $|\alpha|=\alpha_1+\cdots+\alpha_d$ and, for $a,b,c_1,\ldots,c_d\in\mathbb{R}$,
\begin{align*}
 F^{(d)}_C\left(\begin{array}{c}a,b\\c_1,\ldots,c_d\end{array};x_1,\ldots,x_d\right)
&=
\displaystyle{\sum_{n=(n_1,\ldots,n_d)\in(\mathbb{Z}_{\ge 0})^d}\frac{(a)_{|n|}(b)_{|n|}}{(c_1)_{n_1}\cdots (c_d)_{n_d}}
\frac{x_1^{n_1}}{n_1!}\cdots \frac{x_d^{n_d}}{n_d!}}\\
& \qquad\qquad\qquad\qquad\qquad\qquad\qquad (|x_1|^{\frac{1}{2}}+\cdots+|x_d|^{\frac{1}{2}}<1)
\end{align*}
 is the Lauricella multivariable hypergeometric function of type $C$.
 Notice that, though $F^{(d)}_C$ is in general an infinite series, 
 $P^{(\alpha,\beta)}_{d,k}(x)$ is actually a polynomial (of degree at most $k$) in $x$
 because of the specialization $a=-k$. 
 Remark that $F^{(1)}_C$ is equal to the Gauss hypergeometric function ${}_2F_{1}$
 and hence $P^{(\alpha,\beta)}_{1,k}(x)$ coincides with the classical Jacobi polynomial.
 When $d=2$, $F^{(2)}_C$ equals the Appell hypergeometric function $F_4$.
 It is worth commenting that, if $\alpha\in(\mathbb{Z}_{\ge 0})^2$,
 then the above generalized Jacobi polynomial $P^{(\alpha,\beta)}_{2,k}(x)$
 can be expressed by the generalized hypergeometric function ${}_4F_3$
 (more precisely, see \eqref{for:P2} in Example~\ref{ex:d=2}).
 However, for general $d\ge 3$, we can not confirm such degeneracies.

 We further need a modification of special value of $P^{(\alpha,\beta)}_{d,k}(x)$.
 For $0\le h\le n$ and $z=(z_1,\ldots,z_d)\in(\mathbb{Z}_{\ge 0})^d$ with $|z|=h$, put 
\begin{align*}
 X^{(d)}_{M,h}(n;z)
=2(d-1)\delta_{h,0}+\frac{2n\bigl(-(2d-1)\bigr)^{\frac{n-h}{2}}}{n+h}\binom{h}{z_1,\ldots,z_d}
P^{(z,-1)}_{d,\frac{n-h}{2}}\Bigl(\frac{2d-3}{2d-1}\Bigr),
\end{align*}
 where $\delta_{h,0}$ is the Kronecker delta 
 and $\binom{h}{z_1,\ldots,z_d}=\frac{h!}{z_1!\cdots z_d!}$ denotes the multinomial coefficient.

 Write $N^{(d)}_M(n)=N_{\DT^{(d)}_M}(n)$ for short.
 The following is our main result.

\begin{thm}
\label{thm:main}
 Let $M=M^{(d)}=(m_1,\ldots,m_d)\in(\mathbb{Z}_{\ge 3})^d$.
 For $n\ge 3$, it holds that  
\begin{equation}
\label{for:PGTforDT}
 N^{(d)}_{M}(n)
=\Vert M\Vert\sum_{0\le h\le n \atop h\equiv n \!\!\!\!\! \pmod{2}}\sum_{z\in R^{(d)}_M(h)}m^{(d)}_{M}(z)X^{(d)}_{M,h}(n;z),
\end{equation}
 where, for $h\in\mathbb{Z}_{\ge 0}$,
 $R^{(d)}_M(h)$ is a finite set defined by 
\begin{align*}
 R^{(d)}_{M}(h)
=\left\{z=(z_1,\ldots,z_d)\in(\mathbb{Z}_{\ge 0})^d\,\left|
\begin{array}{l}
 z_1\ge \cdots \ge z_d, \ |z|=z_1+\cdots+z_d=h,\\
 \text{there exists $(y_1,\ldots,y_d)\in\mathbb{Z}^d$ such that, as a multiset,}\\
 \{z_1,\ldots,z_d\}=\{m_1|y_1|,\ldots,m_d|y_d|\}
\end{array}
\right.\right\}
\end{align*}
 and, for $z=(z_1,\ldots,z_d)\in R^{(d)}_{M}(h)$,
 $m^{(d)}_{M}(z)$ is a multiplicity of $X^{(d)}_{M,h}(n;z)$ given by  
\begin{align*}
 m^{(d)}_{M}(z)
=\#\left\{(y_1,\ldots,y_d)\in\mathbb{Z}^d\,\left|\,
 \text{as a multiset}, \ \{z_1,\ldots,z_d\}=\{m_1|y_1|,\ldots,m_d|y_d|\}
\right.\right\}.
\end{align*}
\end{thm}

 We remark that
 though we can not calculate the right hand side of \eqref{for:PGTforN} explicitly by hand in general,
 we can definitely calculate the one of \eqref{for:PGTforDT}
 because it is a finite sum of (special values of) polynomials $P^{(\alpha,\beta)}_{d,k}(x)$,
 whose coefficients are given concretely. 
 We also remark that above result is easily extended to general discrete tori
 corresponding to the groups $\mathbb{Z}^d/\Lambda\mathbb{Z}^d$
 where $\Lambda$ is an integer square matrix of size $d$ satisfying $\det \Lambda\ne 0$. 
 In this paper, for simplicity, we only consider such diagonal cases.

 We give a proof of Theorem~\ref{thm:main} in Section~\ref{sec:Spectral_zeta}.
 It is achieved by calculating the additive zeta function $\zeta^{(d)}_{M}(s)$ associated with $\DT^{(d)}_M$ in two ways:
 One is done by using the Ihara zeta function and
 the other is by the theory of the heat kernel obtained in \cite{ChintaJorgensonKarlsson2010}.
 In Section~\ref{sec:normalized_DT}, we investigate a special case,
 that is, the normalized discrete torus $\DT^{(d)}_{m}=\DT^{(d)}_{(m,\ldots,m)}$.
 We give some numerical examples and observations obtained from the examples.
 We see that, in this case, our formula \eqref{for:PGTforDT} seems to give a graph analogue of
 a refinement of the prime geodesic theorem for compact Riemannian manifolds of negative curvature,
 which counts closed geodesics lying in a fixed homology class. 

 Though there are plenty of papers about the error terms in the prime number theorem and
 prime geodesic theorems for Riemannian manifolds,
 there seems to be few results concerning the error terms in \eqref{for:PGTforPi}
 (see \cite{Nagoshi1999} for another kind of prime geodesic theorem with an error term for some special types of graphs).
 We finally remark that, because our formula \eqref{for:PGTforDT} is explicit,
 it may yield a description of the error terms of \eqref{for:PGTforPi} in the case of the discrete tori.

\section{Additive zeta functions}
\label{sec:Spectral_zeta}

 For a $(q+1)$-regular graph $X=(V,E)$,
 define
\[
 \zeta_X(s)
=\sum_{\lambda\in\Spec(\Delta_X)}\frac{1}{\lambda+s}.
\]
 We call $\zeta_X(s)$ an additive zeta function associated with $X$. 
 Here, $\Delta_X$ is the combinatoric Laplacian on $X$,
 that is, $\Delta_X=(q+1)I_{|V|}-A$ with $A$ being the (vertex) adjacency matrix of $X$.
 We sometimes understand that $\Delta_X$ is a linear operator 
 on the $\mathbb{C}$-vector space $L^2(X)=\{f:X\to\mathbb{C}\}$,
 endowed with the inner product $(f,g)=\sum_{x\in V}f(x)\overline{g(x)}$, $f,g\in L^2(X)$,
 acting by 
\[
 (\Delta_X f)(x)=(q+1)f(x)-\sum_{\{x,y\}\in E}f(y).
\]
 Here we denote $\{x,y\}\in E$ by the edge which connects vertices $x\in V$ and $y\in V$.
 The idea for obtaining the main result is to calculate the additive zeta function
 $\zeta^{(d)}_{M}(s)=\zeta_{\DT^{(d)}_M}(s)$ associated with $\DT^{(d)}_M$ in two ways;
 One is via the Ihara zeta function and the other is via the heat kernel on $\DT^{(d)}_M$.

 First, it is useful to give here the explicit descriptions 
 of the eigenvalues and the corresponding eigenfunctions of the combinatoric Laplacian $\Delta^{(d)}_M$ of $\DT^{(d)}_M$.
 Let 
\begin{align*}
 V^{(d)}_M
&=\prod^{d}_{j=1}\mathbb{Z}/m_j\mathbb{Z}
=\left\{(x_1,\ldots,x_d)\,\left|\,x_j\in\{0,1,\ldots,m_j-1\},\ j=1,2,\ldots,d\right.\right\}, \\
 (V^{(d)}_M)^{*}
&=\prod^{d}_{j=1}\frac{1}{m_j}\mathbb{Z}\Big/\mathbb{Z}
=\left\{(v_1,\ldots,v_d)\,\left|\,v_j\in\left\{0,\frac{1}{m_j},\ldots,\frac{m_j-1}{m_j}\right\},\ j=1,2,\ldots,d\right.\right\}.
\end{align*}
 Notice that $V^{(d)}_M$, $(V^{(d)}_M)^{*}$ are the sets of all vertices of $\DT^{(d)}_M$ and its dual $(\DT^{(d)}_M)^{*}$
 (see \cite{ChintaJorgensonKarlsson2015}), respectively.
 Moreover, for $x=(x_1,\ldots,x_d)\in V^{(d)}_M$ and $v=(v_1,\ldots,v_d)\in (V^{(d)}_M)^{*}$,
 put $(x,v)=x_1v_1+\cdots+x_dv_d$.
 
\begin{lem}
\label{lem:specData}
 For $v=(v_1,\ldots,v_d)\in(V^{(d)}_M)^{*}$, let 
\[
 \lambda_{v}=2d-2\sum^{d}_{k=1}\cos\bigl(2\pi v_k\bigr), \quad
 \phi_v(x)=\Vert M\Vert^{-\frac{1}{2}}e^{2\pi i(x,v)}.
\]
 Then, we have $\Spec(\Delta^{(d)}_M)=\bigl\{\lambda_v\,\bigl|\,v\in(V^{(d)}_M)^{*}\bigr\}$ and
 see that $\phi_v(x)$ is an orthonormal eigenfunction corresponding to $\lambda_{v}$, 
 that is, $\bigl\{\phi_v\bigr\}_{v\in (V^{(d)}_M)^{*}}$ forms an orthonormal basis of $L^{2}(\DT^{(d)}_M)$.
\end{lem}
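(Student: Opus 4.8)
The plan is to exploit the fact that $\DT^{(d)}_M$ is a Cayley graph of the abelian group $V^{(d)}_M=\prod_{j=1}^d\mathbb{Z}/m_j\mathbb{Z}$, so that the eigenfunctions of the combinatoric Laplacian are precisely the characters of this group, indexed by the dual $(V^{(d)}_M)^{*}$. First I would write out the action of $\Delta^{(d)}_M$ explicitly. Since the generating set is $\{\pm\delta_1,\ldots,\pm\delta_d\}$, the neighbours of a vertex $x$ are exactly $x\pm\delta_j$ for $j=1,\ldots,d$, and since the graph is $2d$-regular (so $q+1=2d$) the operator acts by
\[
 (\Delta^{(d)}_M f)(x)=2d\,f(x)-\sum_{j=1}^{d}\bigl(f(x+\delta_j)+f(x-\delta_j)\bigr).
\]

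The core of the argument is then to apply this to $\phi_v$. Using $(x\pm\delta_j,v)=(x,v)\pm v_j$, one immediately gets $\phi_v(x\pm\delta_j)=e^{\pm 2\pi i v_j}\phi_v(x)$, so that
\[
 (\Delta^{(d)}_M\phi_v)(x)=\Bigl(2d-\sum_{j=1}^{d}\bigl(e^{2\pi i v_j}+e^{-2\pi i v_j}\bigr)\Bigr)\phi_v(x)=\Bigl(2d-2\sum_{j=1}^{d}\cos\bigl(2\pi v_j\bigr)\Bigr)\phi_v(x),
\]
which is exactly $\lambda_v\phi_v(x)$. This verifies that each $\phi_v$ is an eigenfunction with the asserted eigenvalue.

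It then remains to establish orthonormality and completeness, which I would deduce from the orthogonality relations for characters of the finite abelian group $V^{(d)}_M$. Computing directly,
\[
 (\phi_v,\phi_w)=\frac{1}{\Vert M\Vert}\sum_{x\in V^{(d)}_M}e^{2\pi i(x,v-w)}=\prod_{j=1}^{d}\frac{1}{m_j}\sum_{x_j=0}^{m_j-1}e^{2\pi i x_j(v_j-w_j)},
\]
and each inner factor is a finite geometric sum equal to $1$ when $v_j=w_j$ and to $0$ otherwise, since $m_j(v_j-w_j)\in\mathbb{Z}$ while $v_j-w_j\notin\mathbb{Z}$ whenever $v_j\ne w_j$. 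Hence $(\phi_v,\phi_w)=\delta_{v,w}$, so the family $\{\phi_v\}$ is orthonormal. Because $\#(V^{(d)}_M)^{*}=\Vert M\Vert=\dim L^2(\DT^{(d)}_M)$, these $\Vert M\Vert$ orthonormal vectors form an orthonormal basis, and the corresponding eigenvalues therefore exhaust $\Spec(\Delta^{(d)}_M)$ together with their multiplicities.

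I anticipate no serious obstacle: this is the standard diagonalisation of a Cayley-graph Laplacian by group characters. The only points requiring care are bookkeeping ones, namely correctly identifying the neighbours of $x$ as $x\pm\delta_j$, and observing that distinct $v$ may yield the same value $\lambda_v$, so that the eigenvalues must be counted with multiplicity in $\Spec(\Delta^{(d)}_M)$, consistent with the multiset convention already used in \eqref{for:PGTforN}.
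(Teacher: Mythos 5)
Your proof is correct and complete: the explicit computation of $\Delta^{(d)}_M\phi_v$ via $\phi_v(x\pm\delta_j)=e^{\pm 2\pi i v_j}\phi_v(x)$, the character orthogonality giving $(\phi_v,\phi_w)=\delta_{v,w}$, and the dimension count $\#(V^{(d)}_M)^{*}=\Vert M\Vert=\dim L^2(\DT^{(d)}_M)$ together establish exactly what is claimed. The paper itself offers no argument here, deferring entirely to the citation of Chinta--Jorgenson--Karlsson, and your write-up is precisely the standard diagonalisation of an abelian Cayley-graph Laplacian by characters that the citation stands in for.
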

\begin{proof}
 See \cite{ChintaJorgensonKarlsson2015}.
\end{proof}

\subsection{Ihara zeta functions}
\label{subsec:Ihara_zeta}

 Let $X=(V,E)$ be a $(q+1)$-regular graph.
 It is easy to see that 
\begin{equation}
\label{for:spec1}
 \zeta_X(s)
=\frac{d}{ds}\log\det\bigl(\Delta_X+sI_{|V|}\bigr).
\end{equation}
 We now show that, using the determinant expression
\begin{equation}
\label{for:detexpression_for_Ihara_A}
 Z_{X}(u)
=(1-u^2)^{|E|-|V|}\det\bigl(I_{|V|}-uA+qu^2 I_{|V|}\bigr)
\end{equation}
 of the Ihara zeta function $Z_{X}(u)$ associated with $X$,
 the additive zeta function $\zeta_X(s)$ can be written by the logarithmic derivative of $Z_{X}(u)$.
  
\begin{lem}
 Let $X=(V,E)$ be a $(q+1)$-regular graph.
 Then, it holds that 
\begin{equation}
\label{for:spectral_Ihara}
 \zeta_X(s)=|V|\frac{u_s}{1-u^2_s}+\frac{u_s}{1-qu^2_s}u_s\frac{d}{du}\log Z_{X}(u_s),
\end{equation} 
 where
\begin{equation}
\label{for:quadratic_trans}
 u_s=\frac{s+q+1\pm\sqrt{s^2+2(q+1)s+(q-1)^2}}{2q}.
\end{equation}
\end{lem}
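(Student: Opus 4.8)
The plan is to extract $\zeta_X(s)$ from \eqref{for:spec1} by rewriting $\det(\Delta_X+sI_{|V|})$ through the determinant expression \eqref{for:detexpression_for_Ihara_A}. Since $\Delta_X=(q+1)I_{|V|}-A$, we have $\det(\Delta_X+sI_{|V|})=\det\bigl((q+1+s)I_{|V|}-A\bigr)$, whereas the determinant occurring in \eqref{for:detexpression_for_Ihara_A} is $\det\bigl((1+qu^2)I_{|V|}-uA\bigr)$. These two matrices differ only by the scalars multiplying $I_{|V|}$ and $A$, so they become proportional as soon as the ratio of these scalars is matched. Accordingly, the first step is to introduce a new variable $u=u_s$ by imposing $1+qu^2=(q+1+s)u$.

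That relation is the quadratic $qu^2-(q+1+s)u+1=0$, whose two roots are precisely the quantities in \eqref{for:quadratic_trans}; equivalently, $s=\frac{(1-u_s)(1-qu_s)}{u_s}=u_s^{-1}-(q+1)+qu_s$. For either root, factoring over the eigenvalues of $A$ and using $1+qu_s^2=(q+1+s)u_s$ gives
\[
\det\bigl((1+qu_s^2)I_{|V|}-u_sA\bigr)=u_s^{|V|}\det(\Delta_X+sI_{|V|}).
\]
Hence \eqref{for:detexpression_for_Ihara_A} expresses $\det(\Delta_X+sI_{|V|})$ as $u_s^{-|V|}$ times a power of $1-u_s^2$ times a power of $Z_X(u_s)$, and taking logarithms splits this into three additive pieces, a form ready for \eqref{for:spec1}.

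Next I would apply $\frac{d}{ds}\log$ and insert $\frac{du_s}{ds}=\frac{u_s^2}{qu_s^2-1}$, which follows by differentiating $s=u_s^{-1}-(q+1)+qu_s$. The piece coming from $Z_X(u_s)$ immediately supplies the term $\frac{u_s}{1-qu_s^2}u_s\frac{d}{du}\log Z_X(u_s)$, the logarithmic derivative being the generating function of \eqref{for:generating_funcition_of_N}. The two pieces coming from $u_s^{-|V|}$ and from the power of $1-u_s^2$ each produce a rational function of $u_s$ with denominator $(1-u_s^2)(1-qu_s^2)$; their sum is then simplified using the regularity identity $2|E|=(q+1)|V|$, i.e. $|V|-2|E|=-q|V|$, which forces the numerator to acquire the factor $1-qu_s^2$ and cancel it, leaving exactly $|V|\frac{u_s}{1-u_s^2}$. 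Adding the two contributions yields \eqref{for:spectral_Ihara}.

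The main obstacle is precisely this algebraic collapse: taken separately, the first two pieces carry the spurious denominator $1-qu_s^2$, and it is only the regularity relation that makes this factor cancel and produces the clean denominator $1-u_s^2$ of \eqref{for:spectral_Ihara}. A minor but worthwhile point is that the final identity is independent of the branch chosen in \eqref{for:quadratic_trans}: both roots obey the same defining quadratic and hence share the same expression for $\frac{du_s}{ds}$, so the computation never needs to distinguish them.
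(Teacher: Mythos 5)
Your argument is essentially the paper's own proof: the same change of variable $1+qu_s^2=(q+1+s)u_s$ (equivalently $s=\frac{1-(q+1)u_s+qu_s^2}{u_s}$), the same substitution of $A=(q+1)I_{|V|}-\Delta_X$ into the Bass--Ihara determinant formula to express $\det(\Delta_X+sI_{|V|})$ through $u_s^{|V|}$, $(1-u_s^2)^{|E|-|V|}$ and $Z_X(u_s)$, and the same differentiation using $\frac{du_s}{ds}=-\frac{u_s^2}{1-qu_s^2}$ followed by the collapse of the non-$Z$ terms via $2|E|=(q+1)|V|$. The one point to make explicit is the exponent of $Z_X$: formula \eqref{for:detexpression_for_Ihara_A} as printed should carry $Z_X(u)^{-1}$ on the left (the standard Bass formula, and the version actually consistent with the paper's \eqref{for:detexpression_for_Ihara_Delta}), and only with that exponent does the $Z_X$-piece emerge with the $+$ sign in \eqref{for:spectral_Ihara}; your writeup leaves this exponent unspecified ("a power of $Z_X(u_s)$") and then asserts the $+$ sign, so you should pin it down.
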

\begin{proof}
 Substituting $A=(q+1)I_{|V|}-\Delta_X$ into \eqref{for:detexpression_for_Ihara_A}
 with the relation $2|E|=(q+1)|V|$, we have 
\begin{equation}
\label{for:detexpression_for_Ihara_Delta}
 \det\Bigl(\Delta_X+\frac{1-(q+1)u+qu^2}{u}I_{|V|}\Bigr)
=\left\{\Bigl(u(1-u^2)^{\frac{q-1}{2}}\Bigr)^{|V|}Z_{X}(u)\right\}^{-1}.
\end{equation}
 Now the desired formula \eqref{for:spectral_Ihara} is derived from \eqref{for:spec1}
 and \eqref{for:detexpression_for_Ihara_Delta} 
 with making a change of variable
\begin{equation}
\label{for:su1}
 s=\frac{1-(q+1)u_s+qu^2_s}{u_s},
\end{equation}
 that is, \eqref{for:quadratic_trans},
 together with the relation $\frac{d}{ds}u_s=-\frac{u^2_s}{1-qu^2_s}$.
\end{proof}

 From this lemma,
 because $\DT^{(d)}_{M}$ is a $2d$-regular graph (i.e., $q=2d-1$),
 one immediately obtains the following proposition.
  
\begin{prop}
 We have 
\begin{equation}
\label{for:DT_spectral_Ihara}
 \zeta^{(d)}_M(s)
=\Vert M\Vert\frac{u_s}{1-u^2_s}+\frac{u_s}{1-(2d-1)u^2_s}u_s\frac{d}{du}\log Z^{(d)}_{M}(u_s),
\end{equation} 
 where 
\[
 u_s=\frac{s+2d\pm\sqrt{s^2+4ds+4(d-1)^2}}{2(2d-1)}.
\]
\qed
\end{prop}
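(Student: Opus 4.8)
The plan is to obtain this proposition as an immediate specialization of the preceding Lemma. The essential point is simply that the discrete torus $\DT^{(d)}_M$ satisfies the hypothesis of that Lemma: it is $2d$-regular, hence $(q+1)$-regular with $q+1=2d$, that is, $q=2d-1$, and by construction it has exactly $\Vert M\Vert=m_1\cdots m_d$ vertices, so that $|V|=\Vert M\Vert$.

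First I would substitute $q=2d-1$ and $|V|=\Vert M\Vert$ into the identity \eqref{for:spectral_Ihara}, writing $\zeta^{(d)}_M$ for $\zeta_X$ and $Z^{(d)}_M$ for $Z_X$. The leading term becomes $\Vert M\Vert\,\frac{u_s}{1-u^2_s}$, while the factor $1-qu^2_s$ appearing in the second term becomes $1-(2d-1)u^2_s$; this reproduces \eqref{for:DT_spectral_Ihara} verbatim. Next I would specialize the quadratic transformation \eqref{for:quadratic_trans}: putting $q=2d-1$ gives $q+1=2d$ in the numerator, $2(q+1)=4d$ as the linear coefficient under the radical, $(q-1)^2=4(d-1)^2$ as the constant term, and $2q=2(2d-1)$ in the denominator, which assembles to exactly the stated expression for $u_s$.

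Since the statement is nothing more than a substitution into an already-proved identity, there is no genuine obstacle here. The only step requiring any care is the routine simplification of the discriminant $s^2+2(q+1)s+(q-1)^2$ at $q=2d-1$ into $s^2+4ds+4(d-1)^2$, together with the observation that the sign ambiguity $\pm$ in $u_s$ is inherited unchanged from the Lemma and need not be resolved.
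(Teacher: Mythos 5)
Your proposal is correct and coincides with the paper's own argument: the proposition is stated there as an immediate consequence of the preceding Lemma, obtained by substituting $q=2d-1$ and $|V|=\Vert M\Vert$ into \eqref{for:spectral_Ihara} and \eqref{for:quadratic_trans}, exactly as you do. The discriminant simplification $s^2+2(q+1)s+(q-1)^2=s^2+4ds+4(d-1)^2$ at $q=2d-1$ is the only computation involved, and you carry it out correctly.
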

 
\begin{remark}
 We notice from \eqref{for:su1} that 
\begin{equation}
\label{for:su2}
 s+2d=\frac{1+(2d-1)u^2_s}{u_s}.
\end{equation}
 We will encounter this quadratic transformation in \S~\ref{subsec:HG}.
\end{remark} 
 
 Because $u_s\frac{d}{du}\log Z^{(d)}_{M}(u_s)=\sum^{\infty}_{n=1}N^{(d)}_M(n)u_s^n$,
 what we have to do next is to expand $\zeta^{(d)}_M(s)$ in a series in the variable $u_s$.
 
\subsection{Heat kernels}
\label{subsec:Heat_kernel}

 We next start from the fact that
 the additive zeta function $\zeta_X(s)$ can be expressed as 
 the Laplace transform of the theta function $\theta_X(t)$ of $X$ defined by 
\[
 \theta_X(t)
=\sum_{\lambda\in\Spec(\Delta_X)}e^{-\lambda t}.
\] 
 (This is why we call $\zeta_X(s)$ an additive zeta function.
 See \cite{JorgensonLangGoldfeld1994} and \cite{JorgensonLang2008} for this terminology).
 Actually, noticing that $\Spec(\Delta_X)\subset [0,2(q+1)]$, we have
\begin{equation}
\label{for:spec2}
 \zeta_X(s)
=\int^{\infty}_{0}e^{-st}\theta_X(t)dt \quad (\Re(s)>0).
\end{equation}
 From a general theory,
 we know that $\theta_X(t)$ is essentially given by the heat kernel $K_{X}(x,t):V\times \mathbb{R}_{>0}\to \mathbb{C}$
 on the graph $X$, which is the unique solution of the heat equation 
\[
\left\{
\begin{array}{l}
 \left(\Delta_X+\frac{\partial}{\partial t}\right)f(x,t)=0, \\[5pt]
 \displaystyle{\lim_{t\to 0}}f(x,t)=\delta_{o}(x).
\end{array}
\right.
\]
 Here, $o\in V$ is a fixed base point of $X$ and $\delta_{o}(x)$ is the Kronecker delta, that is,
 $\delta_{o}(x)=1$ if $x=o$ and $0$ otherwise.
 The following is well known.

\begin{lem}
 Let $X=(V,E)$ be a $(q+1)$-regular graph with and $o\in V$.
 Let $\phi_{\lambda}(x)$ be an  orthonormal eigenfunction of $\Delta_X$ with respect to $\lambda\in \Spec(\Delta_X)$. 
 Then, we have 
\begin{equation}
\label{for:HK_general}
 K_X(x,t)
=\sum_{\lambda\in\Spec(\Delta)}e^{-\lambda t}\overline{\phi_{\lambda}(o)}\phi_{\lambda}(x).
\end{equation}
\end{lem}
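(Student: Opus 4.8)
The plan is to exploit the uniqueness built into the definition of the heat kernel: it suffices to exhibit the right-hand side of \eqref{for:HK_general} as \emph{a} solution of the heat equation with the prescribed initial datum, after which the claimed identity follows at once. Accordingly I would set
\[
 F(x,t)=\sum_{\lambda\in\Spec(\Delta_X)}e^{-\lambda t}\overline{\phi_{\lambda}(o)}\phi_{\lambda}(x)
\]
and verify the two defining properties in turn. Because $X$ is finite, $\Spec(\Delta_X)$ is a finite multiset and $L^2(X)$ is finite-dimensional, so $F$ is a finite sum of smooth functions of $t$; there is consequently no issue of convergence nor of interchanging the operators $\Delta_X$ and $\partial_t$ with the summation.

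For the differential equation the key input is that each $\phi_\lambda$ is an eigenfunction, $\Delta_X\phi_\lambda=\lambda\phi_\lambda$. Since $\Delta_X$ acts only on the $x$-variable it pulls a factor $\lambda$ out of the $\lambda$-th term, while $\partial_t$ applied to $e^{-\lambda t}$ produces $-\lambda$; the two contributions cancel termwise, giving $\bigl(\Delta_X+\partial_t\bigr)F\equiv 0$. For the initial condition I would evaluate at $t=0$ and recognize the resulting sum as the completeness (Parseval) relation for the orthonormal system $\{\phi_\lambda\}$: expanding the Kronecker delta as $\delta_o=\sum_\lambda(\delta_o,\phi_\lambda)\phi_\lambda$ and computing $(\delta_o,\phi_\lambda)=\sum_{x\in V}\delta_o(x)\overline{\phi_\lambda(x)}=\overline{\phi_\lambda(o)}$ yields precisely $\delta_o(x)=\sum_\lambda\overline{\phi_\lambda(o)}\phi_\lambda(x)=F(x,0)$, and continuity of $F$ in $t$ then gives $\lim_{t\to 0}F(x,t)=\delta_o(x)$.

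I do not expect a genuine obstacle here, precisely because of finiteness: both checks reduce to elementary algebra, and the only structural facts used are the eigenvalue equation and the completeness of $\{\phi_\lambda\}$, the latter being guaranteed by the self-adjointness of $\Delta_X$ (a real symmetric matrix on the finite-dimensional space $L^2(X)$) via the spectral theorem. The single point deserving explicit comment is the appeal to uniqueness: I would observe that the difference of any two solutions solves the homogeneous equation with zero initial data, and since on a finite graph the evolution is $f(\cdot,t)=e^{-t\Delta_X}f(\cdot,0)$ with $\Delta_X$ self-adjoint on a finite-dimensional space, such a difference vanishes identically. This closes the argument and identifies $K_X(x,t)$ with $F(x,t)$.
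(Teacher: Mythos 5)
Your proof is correct. The paper itself offers no argument for this lemma, simply citing Chung's \emph{Spectral Graph Theory}, and what you have written out is precisely the standard argument that reference contains: exhibit the spectral sum as a solution of the heat equation (termwise cancellation via the eigenvalue relation), check the initial condition by completeness of the orthonormal eigenbasis, and conclude by uniqueness, which you rightly justify through the finite-dimensional evolution $f(\cdot,t)=e^{-t\Delta_X}f(\cdot,0)$. No gaps.
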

\begin{proof}
 See, e.g., \cite{Chung1997}.
\end{proof}

 Now, let us calculate $K^{(d)}_M(x,t)=K_{\DT^{(d)}_M}(x,t)$ and $\theta^{(d)}_{M}(t)=\theta_{\DT^{(d)}_M}(t)$.
 We notice that, in the case of studying $\DT^{(d)}_{M}$, 
 we always take a base point $o=(0,\ldots,0)\in V^{(d)}_{M}$. 
 From Lemma~\ref{lem:specData} and \eqref{for:HK_general}, we have 
\[
 K^{(d)}_M(x,t)
=\frac{1}{\Vert M\Vert}\sum_{v\in (\DT^{(d)}_M)^{*}}e^{-\lambda_v t}e^{2\pi i(x,v)}
\]
 and hence
\begin{equation}
\label{for:theta}
 \theta^{(d)}_M(t)
=\sum_{v\in (\DT^{(d)}_M)^{*}}e^{-\lambda_v t}=\Vert M\Vert K^{(d)}_M(o,t).
\end{equation}

 It is shown, for example in \cite{KarlssonNeuhauser2006} (see also \cite{Karlsson2012}), that 
 the heat kernel $K_{\mathbb{Z}}(x,t)$ on $\mathbb{Z}$ with $o=0$ is given by 
\[
 K_{\mathbb{Z}}(x,t)=e^{-2t}I_x(2t) \quad (x\in\mathbb{Z}),
\]
 where $I_x(t)$ is the $I$-Bessel function (or the modified Bessel function of the first kind) having the expansion  
\begin{equation}
\label{def:I_Bessel}
 I_x(t)
=\sum^{\infty}_{n=0}\frac{1}{n!\Gamma(n+x+1)}\Bigl(\frac{t}{2}\Bigr)^{2n+x}.
\end{equation}
 Hence, by the uniqueness of the heat kernel,
 we see that the heat kernel $K_{\mathbb{Z}^d}(x,t)$ on $\mathbb{Z}^d$ can be written as 
 $K_{\mathbb{Z}^d}(x,t)=\prod^{d}_{j=1}K_{\mathbb{Z}}(x_j,t)$ for $x=(x_1,\ldots,x_d)\in\mathbb{Z}^d$.  
 Moreover, periodizing this as a function on $V^{(d)}_{M}$, we have  
\begin{align*}
 K^{(d)}_M(x,t)
=\sum_{z\in\prod^{d}_{j=1}m_j\mathbb{Z}}K_{\mathbb{Z}^d}(x+z,t)
=\sum_{(z_1,\ldots,z_d)\in\prod^{d}_{j=1}m_j\mathbb{Z}}\prod^{d}_{j=1}K_{\mathbb{Z}}(x_j+z_j,t).
\end{align*}
 This shows from \eqref{for:theta} that 
\begin{align*}
 \theta^{(d)}_M(t)
=\Vert M\Vert K^{(d)}_M(o,t)
&=\Vert M\Vert\sum_{(z_1,\ldots,z_d)\in\prod^{d}_{j=1}m_j\mathbb{Z}}e^{-2dt}\prod^{d}_{j=1}I_{z_j}(2t)\\
&=\Vert M\Vert\sum^{\infty}_{h=0}\sum_{z=(z_1,\ldots,z_d)\in R^{(d)}_{M}(h)}m^{(d)}_{M}(z)e^{-2dt}\prod^{d}_{j=1}I_{z_j}(2t).
\end{align*}
 Here, one obtains the last equation above as follows:
 Let $W=\mathfrak{S}_d \ltimes (\mathbb{Z}/2\mathbb{Z})^d$ with $\mathfrak{S}_d$ being the symmetric group of degree $d$.
 We see that $W$ naturally acts on $\mathbb{Z}^d$
 and can take the set of all partitions of length less than or equal to $d$ 
 as a set of all representatives of the $W$-orbits on $\mathbb{Z}^d$.  
 For a partition $z$ of the length $l(z)\le d$,
 we have $m_M^{(d)}(z) = \# (\prod^d_{j=1} m_j \mathbb{Z}) \cap Wz$ where $Wz$ is the $W$-orbit of $z$.
 Then, for $h\in\mathbb{Z}_{\ge 0}$,
 we have $R_M^{(d)}(h)=\{z\vdash h\,|\,\text{$l(z)\le d$,\ $m_M^{(d)}(z) >0$}\}$ 
 and, because $\prod^d_{j=1} I_{z_j}(2t)$ is $W$-invariant (notice that $I_x(t)=I_{-x}(t)$),
 obtain the desired equation.

 Now, from \eqref{for:spec2}, it holds that  
\begin{align}
 \zeta^{(d)}_{M}(s)
&=\Vert M\Vert\sum^{\infty}_{h=0}\sum_{z\in R^{(d)}_{M}(h)}m^{(d)}_{M}(z)S^{(d)}_{z}(s+2d)
\nonumber\\
\label{for:spectral_HK}
&=\Vert M\Vert\sum^{\infty}_{h=0}\sum_{z\in R^{(d)}_{M}(h)}m^{(d)}_{M}(z)T^{(d)}_{z}(u_s),
\end{align}
 where, for $z=(z_1,\ldots,z_d)\in R^{(d)}_{M}(h)$, we put 
\begin{align*}
 S^{(d)}_z(x)
&=\int^{\infty}_{0}e^{-xt}\prod^{d}_{j=1}I_{z_j}(2t)dt \quad (\Re(x)>2d),\\
 T^{(d)}_z(u)
&=S^{(d)}_z\Bigl(\frac{1+(2d-1)u^2}{u}\Bigr).
\end{align*}
 Here, we have used the relation \eqref{for:su2}.
 We will study some properties of these functions in \S~\ref{subsec:HG}.

\subsection{Proof of Theorem~\ref{thm:main}}

 Let us give a proof of the main result.

\begin{proof}
[Proof of Theorem~\ref{thm:main}]
 From \eqref{for:DT_spectral_Ihara}, we have
\begin{equation}
\label{for:logderiIhara}
 u_s\frac{d}{du}\log Z^{(d)}_{M}(u_s)
=-\Vert M\Vert\frac{1-(2d-1)u^2_s}{1-u^2_s}+\frac{1-(2d-1)u^2_s}{u_s}\zeta^{(d)}_{M}(s).
\end{equation}
 Therefore, based on the equation \eqref{for:generating_funcition_of_N},
 one can obtain a formula for $N^{(d)}_M(n)$ by expanding the right hand side of \eqref{for:logderiIhara} in a series
 in the variable $u_s$.
 The first term on the right hand side of \eqref{for:logderiIhara} can be easily expanded as follows:
\begin{equation}
\label{for:FirstTerm}
 -\Vert M\Vert+2\Vert M\Vert(d-1)u^2_s+2\Vert M\Vert(d-1)\sum_{n\ge 4 \atop n\,:\,\text{even}}u^n_s.
\end{equation} 
 Moreover, 
 since we have from \eqref{for:spectral_HK} together with \eqref{for:G},
 which we will prove in \S~\ref{subsec:HG},
\begin{align*}
 \zeta^{(d)}_{M}(s)
&=
\Vert M\Vert\sum^{\infty}_{h=0}\sum^{\infty}_{k=0}
\bigl(-(2d-1)\bigr)^k
 \left\{\sum_{z\in R^{(d)}_{M}(h)}m^{(d)}_{M}(z)C_z 
 P^{(z,0)}_{d,k}\Bigl(\frac{2d-3}{2d-1}\Bigr)\right\}u^{h+2k+1}_s
\end{align*}
 with $C_z=\binom{|z|}{z_1,\ldots,z_d}$,
 the second term of \eqref{for:logderiIhara} can be written as 
\begin{align*}
 &
\Vert M\Vert\sum^{\infty}_{h=0}\sum^{\infty}_{k=0}
\bigl(-(2d-1)\bigr)^k
 \left\{\sum_{z\in R^{(d)}_{M}(h)}m^{(d)}_{M}(z)C_z 
 P^{(z,0)}_{d,k}\Bigl(\frac{2d-3}{2d-1}\Bigr)\right\}u^{h+2k}_s \\
&\ \ \ +\Vert M\Vert\sum^{\infty}_{h=0}\sum^{\infty}_{k=1}
\bigl(-(2d-1)\bigr)^k
 \left\{\sum_{z\in R^{(d)}_{M}(h)}m^{(d)}_{M}(z)C_z 
 P^{(z,0)}_{d,k-1}\Bigl(\frac{2d-3}{2d-1}\Bigr)\right\}u^{h+2k}_s \\
=&\Vert M\Vert\sum^{\infty}_{h=0}
 \left\{\sum_{z\in R^{(d)}_{M}(h)}m^{(d)}_{M}(z)C_z \right\}u^{h}_s \\
&\ \ \ +\Vert M\Vert\sum^{\infty}_{h=0}\sum^{\infty}_{k=1}
\bigl(-(2d-1)\bigr)^k\frac{h+2k}{h+k}
 \left\{\sum_{z\in R^{(d)}_{M}(h)}m^{(d)}_{M}(z)C_z 
 P^{(z,-1)}_{d,k}\Bigl(\frac{2d-3}{2d-1}\Bigr)\right\}u^{h+2k}_s \\
=&\Vert M\Vert\sum^{\infty}_{n=0}
 \left\{\sum_{z\in R^{(d)}_{M}(n)}m^{(d)}_{M}(z)C_z \right\}u^{n}_s \\
&\ \ \ +\Vert M\Vert\sum^{\infty}_{n=1}
\left\{\sum_{0\le h\le n-2 \atop h\equiv n \!\!\!\!\! \pmod{2}}\frac{2n\bigl(-(2d-1)\bigr)^{\frac{n-h}{2}}}{n+h}
 \sum_{z\in R^{(d)}_{M}(h)}m^{(d)}_{M}(z)C_z 
 P^{(z,-1)}_{d,\frac{n-h}{2}}\Bigl(\frac{2d-3}{2d-1}\Bigr)\right\}u^{n}_s.
\end{align*}
 Here, in the first equality,
 we have used the equation $P^{(z,0)}_{d,0}\bigl(\frac{2d-3}{2d-1}\bigr)=1$ and, for $k\ge 1$,
\[
 P^{(z,0)}_{d,k}(x)+P^{(z,0)}_{d,k-1}(x)
=\frac{|z|+2k}{|z|+k}P^{(z,-1)}_{d,k}(x),
\]
 which are easily seen from the definition.
 Let us write the coefficient of $u^n_s$ of the rightmost hand side of the above formula as, say, $C(n)$.
 Noticing that $R^{(d)}_{M}(0)=\{(0,\ldots,0)\}$ and $R^{(d)}_{M}(1)=R^{(d)}_{M}(2)=\emptyset$
 because $m_j\ge 3$ for all $j=1,2,\ldots,d$,
 we have 
\[
 C(0)
=\Vert M\Vert,\quad 
 C(1)
=0,\quad 
 C(2)
=\Vert M\Vert\cdot \bigl(-2(2d-1)\bigr)P^{(0,-1)}_{d,1}\Bigl(\frac{2d-3}{2d-1}\Bigr)
=-2\Vert M\Vert(d-1).
\]
 This shows that
 the second term on the right hand side of \eqref{for:logderiIhara} is expanded as follows: 
\begin{align}
\label{for:SecondTerm}
&\ \ \ \Vert M\Vert-2\Vert M\Vert(d-1)u^2_s\\
\nonumber
&+\Vert M\Vert\sum^{\infty}_{n=3}
\left\{\sum_{0\le h\le n \atop h\equiv n \!\!\!\!\! \pmod{2}}\frac{2n\bigl(-(2d-1)\bigr)^{\frac{n-h}{2}}}{n+h}
 \sum_{z\in R^{(d)}_{M}(h)}m^{(d)}_{M}(z)C_z 
 P^{(z,-1)}_{d,\frac{n-h}{2}}\Bigl(\frac{2d-3}{2d-1}\Bigr)\right\}u^{n}_s
\end{align}
 Combining \eqref{for:FirstTerm} and \eqref{for:SecondTerm},
 and noticing that $m^{(d)}_{M}(0,\ldots,0)=1$,
 we obtain the desired formula \eqref{for:PGTforDT}.
 This ends the proof.
\end{proof}

\subsection{A quadratic transformation $F^{(d)}_C$}
\label{subsec:HG}

 We here prove the following proposition,
 which contains a key formula for our results.

\begin{prop}
 Let $z=(z_1,\ldots,z_d)\in (\mathbb{Z}_{\ge 0})^d$.

\noindent
 $(1)$
 We have 
\begin{align}
\label{for:F_d1}
 S^{(d)}_{z}(x)
&=\frac{C_z}{x^{|z|+1}}
F^{(d)}_C\left(
\begin{array}{c}
 \frac{|z|}{2}+\frac{1}{2},\frac{|z|}{2}+1\\[3pt] 
 z_1+1,\ldots,z_d+1
\end{array}
;\,\frac{4}{x^2},\ldots,\frac{4}{x^2}
\right)\\
\label{for:F_d2}
&=\frac{1}{x^{|z|+1}}\sum^{\infty}_{n=0}A^{(d)}_z(n)\frac{(\frac{|z|}{2}+\frac{1}{2})_n(\frac{|z|}{2}+1)_n}{(h+1)_n}
\frac{(\frac{4}{x^2})^n}{n!},
\end{align}
 where $C_z=\binom{|z|}{z_1,\ldots,z_d}$ and
\[
 A^{(d)}_{z}(n)=\sum_{n_1,\ldots,n_d\ge 0 \atop n_1+\cdots +n_d=n}\binom{n}{n_1,\ldots,n_d}\binom{n+|z|}{n_1+z_1,\ldots,n_d+z_d}.
\]

\noindent
$(2)$
 We have 
\begin{equation}
\label{for:G}
 T^{(d)}_{z}(u)
=C_z u^{|z|+1}
 \sum^{\infty}_{k=0}P^{(z,0)}_{d,k}\Bigl(\frac{2d-3}{2d-1}\Bigr)\bigl(-(2d-1)u^2\bigr)^k,
\end{equation}
 where $P^{(\alpha,\beta)}_{d,k}(x)$ is the generalization of the Jacobi polynomial defined by \eqref{def:geneJacobi}.
 Moreover, if $\alpha\in(\mathbb{Z}_{\ge 0})^d$, then it can be written as   
\begin{equation}
\label{for:geneJacobi1}
 P^{(\alpha,\beta)}_{d,k}(x)
=\frac{1}{C_{\alpha}}\frac{(|\alpha|+1)_k}{k!}
\sum^{k}_{n=0}A^{(d)}_{\alpha}(n)\frac{(-k)_n(k+|\alpha|+\beta+1)_n}{(|\alpha|+1)_n}\frac{(\frac{1-x}{2})^n}{n!}.
\end{equation}
\end{prop}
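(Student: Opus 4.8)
The plan is to evaluate $F^{(d)}_z(x)$ by termwise Laplace integration and then to read off $G^{(d)}_z(u)$ from it via the quadratic substitution \eqref{for:su2}. For part $(1)$ I would insert the series \eqref{def:I_Bessel} for each $I_{z_j}(2t)$ into the integrand; since every $z_j\in\mathbb{Z}_{\ge 0}$ the product equals $\sum_{n_1,\ldots,n_d\ge 0}t^{2|n|+|z|}\big/\prod_j n_j!\,(n_j+z_j)!$, and on $\Re(x)>2d$ this converges absolutely, so I may integrate term by term using $\int_0^\infty e^{-xt}t^{2|n|+|z|}\,dt=(2|n|+|z|)!/x^{2|n|+|z|+1}$. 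Keeping the full sum over $(n_1,\ldots,n_d)$, I rewrite the factorial by the Legendre duplication formula as $(2|n|+|z|)!/|z|!=4^{|n|}(\tfrac{|z|}2+\tfrac12)_{|n|}(\tfrac{|z|}2+1)_{|n|}$ and use $(n_j+z_j)!=z_j!\,(z_j+1)_{n_j}$ together with $\prod_j z_j!=|z|!/C_z$; this assembles the series into the $F^{(d)}_C$-form \eqref{for:F_d1}, the convergence region $|x|>2d$ matching the constraint $d\,(4/x^2)^{1/2}<1$ of $F^{(d)}_C$. Collapsing instead to fixed total degree $N=|n|$ and recognising $\sum_{|n|=N}1/\prod_j n_j!\,(n_j+z_j)!=A^{(d)}_z(N)\big/\bigl(N!\,(N+|z|)!\bigr)$ straight from the definition of $A^{(d)}_z$, the same duplication identity (now dividing by $(N+|z|)!/|z|!=(|z|+1)_N$) yields the diagonal form \eqref{for:F_d2}, with $h=|z|$.

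For part $(2)$ I would substitute $x=\bigl(1+(2d-1)u^2\bigr)/u$, which is \eqref{for:su2} with $x$ in the role of $s+2d$, into the Lauricella form \eqref{for:F_d1}. Writing $c=2d-1$ and $w=u^2$, the prefactor $1/x^{|z|+1}$ becomes $u^{|z|+1}(1+cw)^{-(|z|+1)}$ and every argument $4/x^2$ becomes $4w/(1+cw)^2$; since these arguments are all equal I collect the $F^{(d)}_C$-series by total degree $N=|n|$, the coefficient being $(\tfrac{|z|}2+\tfrac12)_N(\tfrac{|z|}2+1)_N\,B_z(N)$ with $B_z(N)=\sum_{|n|=N}\prod_{j}1/\bigl((z_j+1)_{n_j}n_j!\bigr)$ the diagonal coefficient of the Lauricella series. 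Using $(\tfrac{|z|}2+\tfrac12)_N(\tfrac{|z|}2+1)_N4^N=(2N+|z|)!/|z|!$ and expanding $(1+cw)^{-(2N+|z|+1)}=\sum_m(2N+|z|+1)_m(-cw)^m/m!$, I collect the coefficient of $w^k$ by setting $N+m=k$. Because $(1-x)/2=1/c$ at $x=(2d-3)/(2d-1)$, expanding the defining series \eqref{def:geneJacobi} of $P^{(z,0)}_{d,k}$ at that point produces the very same diagonal coefficients $B_z(N)$, so term-by-term matching of the coefficient of $B_z(N)$ reduces \eqref{for:G} to the single Pochhammer identity
\[
 \frac{(2N+|z|)!}{|z|!}\,\frac{(2N+|z|+1)_{k-N}}{(k-N)!}
=\frac{(|z|+1)_k}{k!}\,(-k)_N\,(k+|z|+1)_N\,(-1)^N,
\]
both sides equalling $(N+|z|+k)!/\bigl(|z|!\,(k-N)!\bigr)$. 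This quadratic-substitution bookkeeping is the one genuine computation, and the place I expect to be most delicate, since the whole double series must telescope into this one elementary identity.

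Finally, for \eqref{for:geneJacobi1} I would expand the definition \eqref{def:geneJacobi} of $P^{(\alpha,\beta)}_{d,k}$ directly, again collecting the $F^{(d)}_C$-series by $|m|=n$ into the diagonal coefficient $B_\alpha(n)$. When $\alpha\in(\mathbb{Z}_{\ge 0})^d$ the relations $(\alpha_j+1)_{m_j}=(m_j+\alpha_j)!/\alpha_j!$, $\prod_j\alpha_j!=|\alpha|!/C_\alpha$ and $(|\alpha|+1)_n=(n+|\alpha|)!/|\alpha|!$ turn the diagonal sum into $B_\alpha(n)=A^{(d)}_\alpha(n)\big/\bigl(C_\alpha(|\alpha|+1)_n\,n!\bigr)$, which is precisely the identity already isolated in part $(1)$; substituting it gives the stated expression. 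In sum, part $(1)$ and this last formula are routine termwise manipulations once the duplication and diagonal-sum identities are in hand, and the essential difficulty lies in organising the quadratic substitution of part $(2)$ so that the double series collapses into the claimed single Pochhammer identity.
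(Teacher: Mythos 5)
Your proposal is correct and follows essentially the same route as the paper: termwise Laplace integration of the Bessel series plus the duplication formula for part (1), and the quadratic substitution followed by the generalized binomial expansion of $\bigl(1+(2d-1)u^2\bigr)^{-(|z|+1+2|n|)}$ with re-indexing $k=N+m$ for part (2). The single Pochhammer identity you isolate (both sides equal $(N+|z|+k)!/\bigl(|z|!\,(k-N)!\bigr)$, which checks) is exactly the paper's inline rewriting of the binomial coefficient $\binom{|z|+l+2|n|}{l}$ in Pochhammer form, just organized as a diagonal coefficient comparison rather than carried through with the full multi-index.
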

\begin{proof}
 Put $h=|z|$.
 Using \eqref{def:I_Bessel}, we have 
\begin{align*}
 S^{(d)}_{z}(x)
&=\sum_{n=(n_1,\ldots,n_d)\in(\mathbb{Z}_{\ge 0})^d}\frac{1}{n_1!\cdots n_d!(n_1+z_1)!\cdots (n_d+z_d)!}
\int^{\infty}_{0}e^{-xt}t^{h+2|n|+1}\frac{dt}{t}\\
&=\frac{1}{x^{h+1}}\sum_{n=(n_1,\ldots,n_d)\in(\mathbb{Z}_{\ge 0})^d}
\frac{(h+2|n|)!}{n_1!\cdots n_d!(n_1+z_1)!\cdots (n_d+z_d)!}\frac{1}{x^{2|n|}}\\
&=\frac{1}{x^{h+1}}\frac{h!}{z_1!\cdots z_d!}\sum_{n=(n_1,\ldots,n_d)\in(\mathbb{Z}_{\ge 0})^d}
\frac{\bigl(\frac{h}{2}+\frac{1}{2}\bigr)_{|n|}\bigl(\frac{h}{2}+1\bigr)_{|n|}}{(z_1+1)_{n_1}\cdots (z_d+1)_{n_d}}
\frac{(\frac{4}{x^2})^{|n|}}{n_1!\cdots n_d!}.
\end{align*}
 Here, we have employed the identities 
 $(a+n)!=a!(a+1)_n$ and $(a+2n)!=a!2^{2n}\bigl(\frac{a}{2}+\frac{1}{2}\bigr)_n\bigl(\frac{a}{2}+1\bigr)_n$
 for $a,n\in\mathbb{Z}_{\ge 0}$. 
 Hence we obtain \eqref{for:F_d1}.
 The formula \eqref{for:F_d2} is easily obtained from \eqref{for:F_d1}. 

 We next concentrate on $G^{(d)}_{z}(u)$.
 From \eqref{for:F_d1}, it holds that 
\begin{align*}
 T^{(d)}_z(u)
=C_zu^{h+1}\sum_{n=(n_1,\ldots,n_d)\in(\mathbb{Z}_{\ge 0})^d}\frac{\bigl(\frac{h}{2}+\frac{1}{2}\bigr)_{|n|}
\bigl(\frac{h}{2}+1\bigr)_{|n|}}{(z_1+1)_{n_1}\cdots (z_d+1)_{n_d}}
\frac{2^{2|n|}u^{2|n|}}{n_1!\cdots n_d!}\bigl(1+(2d-1)u^2\bigr)^{-(h+1+2|n|)}.
\end{align*}
 The generalized binomial theorem yields 
\begin{align*}
 \bigl(1+(2d-1)u^2\bigr)^{-(h+1+2|n|)}
&=\sum^{\infty}_{l=0}\binom{h+l+2|n|}{l}\bigl(-(2d-1)u^2\bigr)^{l}\\
&=\sum^{\infty}_{l=0}\frac{(h+1)_{l+|n|}(l+|n|+h+1)_{|n|}}{l!2^{2|n|}\bigl(\frac{h}{2}+\frac{1}{2}\bigr)_{|n|}
\bigl(\frac{h}{2}+1\bigr)_{|n|}}\bigl(-(2d-1)u^2\bigr)^{l}
\end{align*}
 and hence 
\begin{align*}
& \ \ \ T^{(d)}_z(u)\\
&=C_zu^{h+1}\sum^{\infty}_{l=0}\left\{\sum_{n=(n_1,\ldots,n_d)\in(\mathbb{Z}_{\ge 0})^d}
\frac{(h+1)_{l+|n|}(l+|n|+h+1)_{|n|}}{(z_1+1)_{n_1}\cdots (z_d+1)_{n_d}}
\frac{u^{2(l+|n|)}}{l!n_1!\cdots n_d!}\right\}
\bigl(-(2d-1)\bigr)^{l}\\
&=C_zu^{h+1}\sum^{\infty}_{k=0}\left\{\sum_{n=(n_1,\ldots,n_d)\in(\mathbb{Z}_{\ge 0})^d \atop |n|\le k}
\frac{(h+1)_{k}(k+h+1)_{|n|}}{(z_1+1)_{n_1}\cdots (z_d+1)_{n_d}}
\frac{u^{2k}}{(|n|-k)!n_1!\cdots n_d!}\right\}
\bigl(-(2d-1)\bigr)^{k-|n|}\\
&=C_zu^{h+1}\sum^{\infty}_{k=0}\frac{(h+1)_{k}}{k!}\left\{\sum_{n=(n_1,\ldots,n_d)\in(\mathbb{Z}_{\ge 0})^d \atop |n|\le k}
\frac{(-k)_{|n|}(k+h+1)_{|n|}}{(z_1+1)_{n_1}\cdots (z_d+1)_{n_d}}
\frac{\bigl(\frac{1}{2d-1}\bigr)^{|n|}}{n_1!\cdots n_d!}\right\}
\bigl(-(2d-1)u^2\bigr)^{k}.
\end{align*}
 Therefore we obtain \eqref{for:G}.
 The equation \eqref{for:geneJacobi1} follows in the same manner as \eqref{for:F_d2}.
\end{proof}

\begin{example}
\label{ex:d=1}
 When $d=1$, we have respectively from \eqref{for:F_d1} and \eqref{for:G} 
\begin{align}
 S^{(1)}_h(x)
&=\frac{1}{x^{h+1}}
{}_2F_{1}\left(
\begin{array}{c}
 \frac{h}{2}+\frac{1}{2},\frac{h}{2}+1\\[3pt] 
 h+1
\end{array}
;\,\frac{4}{x^2}
\right),
\nonumber\\
\label{for:G1}
 T^{(1)}_h(u)
&=u^{h+1}\sum^{\infty}_{n=0}P^{(h,0)}_{1,k}(-1)(-u^2)^{k}=\frac{u^{h+1}}{1+u^2}.
\end{align}
 Here, in the last equality in \eqref{for:G1}, we have used the well-known formula 
\begin{equation}
\label{for:Jacobi-1}
 P^{(\alpha,\beta)}_{1,k}(-1)=(-1)^k\binom{k+\beta}{k}.
\end{equation}
 We remark that \eqref{for:G1} is also obtained from the Pfaff transformation 
\[
 {}_2F_{1}\left(
\begin{array}{c}
 \frac{a}{2},\frac{a+1}{2}\\[3pt] 
 a-b+1
\end{array}
;\,\frac{4x}{(1+x)^2}
\right)
=
(1+x)^a
{}_2F_{1}\left(
\begin{array}{c}
 a,b\\[3pt] 
 a-b+1
\end{array}
;\,x
\right)
\]
 for the Gauss hypergeometric functions ${}_2F_1$.
 From this, we can say that \eqref{for:G} is a kind of generalization
 of the Pfaff transformation for $F^{(d)}_C$ with special parameters.
\end{example}

\begin{example}
\label{ex:d=2}
 We next consider the case $d=2$.
 Let $z=(z_1,z_2)\in (\mathbb{Z}_{\ge 0})^2$.
 It is easy to see that 
\begin{equation}
\label{for:A2}
 A^{(2)}_{z}(n)=\binom{2n+|z|}{n+z_1,n+z_2}.
\end{equation}
 Hence, we have respectively from \eqref{for:F_d1} and \eqref{for:G} 
\begin{align*}
 S^{(2)}_z(x)
&=\frac{C_z}{x^{|z|+1}}
{}_4F_{3}\left(
\begin{array}{c}
 \frac{|z|}{2}+\frac{1}{2},\frac{|z|}{2}+\frac{1}{2},\frac{|z|}{2}+1,\frac{|z|}{2}+1\\[3pt] 
 |z|+1,z_1+1,z_2+1
\end{array}
;\,\frac{16}{x^2}
\right),\\
 T^{(2)}_z(u)
&=C_zu^{|z|+1}\sum^{\infty}_{k=0} P^{(z,0)}_{2,k}\Bigl(\frac{1}{3}\Bigr)(-3u^2)^{k}.
\end{align*}
 Here, ${}_pF_{q}$ is the generalized hypergeometric function defined by
\[
 {}_pF_{q}\left(\begin{array}{c}a_1,\ldots,a_p\\b_1,\ldots,b_q\end{array};x\right)
=
\sum^{\infty}_{n=0}\frac{(a_1)_{n}\cdots (a_p)_{n}}{(b_1)_{n}\cdots (b_q)_{n}}\frac{x^{n}}{n!}.
\] 
 Moreover, from \eqref{for:geneJacobi1} and \eqref{for:A2},
 we have for $\alpha=(\alpha_1,\alpha_2)\in(\mathbb{Z}_{\ge 0})^2$
\begin{equation}
\label{for:P2}
 P^{(\alpha,\beta)}_{2,k}(x)
=\frac{(|\alpha|+1)_k}{k!}
{}_4F_{3}\left(
\begin{array}{c}
 -k,k+|\alpha|+\beta+1,\frac{|\alpha|}{2}+\frac{1}{2},\frac{|\alpha|}{2}+1\\[3pt] 
 |\alpha|+1,\alpha_1+1,\alpha_2+1
\end{array}
;\,2(1-x)
\right).
\end{equation}
\end{example}

\section{Normalized discrete tori}
\label{sec:normalized_DT}

\subsection{Prime geodesic theorem for $\DT^{(d)}_m$}

 In this section,
 we consider a special case, that is, 
 a normalized discrete torus $\DT^{(d)}_m=\DT^{(d)}_{(m,\ldots,m)}$ for $m\ge 3$.
 In this case, the result obtained in the previous section becomes more simple form as below.
 Here, we put $N^{(d)}_{m}(n)=N^{(d)}_{(m,\ldots,m)}(n)$.
 
\begin{thm}
\label{thm:main2}
 For $n\ge 3$, it holds that  
\begin{equation}
\label{for:PGTforNDT}
 N^{(d)}_{m}(n)
=m^d\sum_{0\le h\le \frac{n}{m} \atop mh\equiv n \!\!\!\!\! \pmod{2}}\sum_{\mu\vdash h \atop l(\mu)\le d}
m(\mu)X^{(d)}_{m,h}(n;\mu),
\end{equation}
 where, for a partition $\mu=(\mu_1,\ldots,\mu_l)=(1^{m_1(\mu)}\cdots h^{m_{h}(\mu)})\vdash h$ of length $l(\mu)=l\le d$
 with $m_j(\mu)$ being the multiplicity of $j$ in $\mu$,
\begin{align}
\label{def:multiplicity}
 m(\mu)
&=2^{l}\binom{d}{l}u(\mu) \ \ \text{with} \ \ u(\mu)=\binom{l}{m_1(\mu),\ldots,m_h(\mu)}, \\
\nonumber
 X^{(d)}_{m,h}(n;\mu)
&=2(d-1)\delta_{h,0}+\frac{2n\bigl(-(2d-1)\bigr)^{\frac{n-mh}{2}}}{n+mh}\binom{mh}{m\mu_1,\ldots,m\mu_l}
P^{(m\mu,-1)}_{d,\frac{n-mh}{2}}\Bigl(\frac{2d-3}{2d-1}\Bigr).
\end{align}
\qed
\end{thm}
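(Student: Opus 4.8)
The plan is to obtain Theorem~\ref{thm:main2} as a direct specialization of Theorem~\ref{thm:main} to the diagonal parameter $M=(m,\ldots,m)$, so that $\Vert M\Vert=m^d$. The only genuine content is to identify the index set $P^{(d)}_M(h)$ and the multiplicity $m^{(d)}_M(z)$ in this symmetric situation; everything else is a re-indexing of the sum in \eqref{for:PGTforDT}.

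First I would analyze $P^{(d)}_M(h)$ when all $m_j=m$. The defining condition requires $\{z_1,\ldots,z_d\}=\{m|y_1|,\ldots,m|y_d|\}$ as a multiset, so every $z_j$ must be a nonnegative multiple of $m$; in particular $P^{(d)}_M(h)=\emptyset$ unless $m\mid h$. Writing $h=mh'$ and $z_j=mw_j$, the weakly decreasing tuple $(w_1,\ldots,w_d)$ is exactly a partition $\mu=(\mu_1,\ldots,\mu_l)\vdash h'$ of length $l=l(\mu)\le d$ padded by $d-l$ zeros. This sets up a bijection $z=m\mu\leftrightarrow\mu$ between $P^{(d)}_M(mh')$ and the partitions $\mu\vdash h'$ with $l(\mu)\le d$. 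Substituting $h=mh'$ into the outer summation of \eqref{for:PGTforDT}, the constraints $0\le h\le n$ and $h\equiv n\pmod 2$ become $0\le h'\le n/m$ and $mh'\equiv n\pmod 2$; after renaming $h'$ back to $h$, this reproduces the summation ranges in \eqref{for:PGTforNDT}.

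Next I would compute the multiplicity, which I expect to be the main (though still elementary) step. By definition $m^{(d)}_M(m\mu)$ counts the tuples $(y_1,\ldots,y_d)\in\mathbb{Z}^d$ whose multiset of absolute values equals $\{\mu_1,\ldots,\mu_l,0,\ldots,0\}$, with $d-l$ zeros. Such a tuple is specified by three independent choices: which $l$ of the $d$ coordinates carry the nonzero values ($\binom{d}{l}$ ways); how the nonzero part-sizes are distributed among those coordinates, where equal parts are interchangeable ($u(\mu)=\binom{l}{m_1(\mu),\ldots,m_h(\mu)}$ ways); and a sign for each of the $l$ nonzero coordinates ($2^l$ ways). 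Hence $m^{(d)}_M(m\mu)=2^l\binom{d}{l}u(\mu)=m(\mu)$; equivalently one checks the closed form $2^l\binom{d}{l}u(\mu)=\tfrac{2^l\,d!}{(d-l)!\,\prod_j m_j(\mu)!}$, which is precisely the number of distinct sign-and-position patterns realizing the prescribed multiset.

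Finally I would substitute these identifications into $X^{(d)}_{M,h}(n;z)$. Since $m\ge 3$ we have $\delta_{mh',0}=\delta_{h',0}$; the padding by zeros contributes only trivial factors $0!=1$, so that $\binom{h}{z_1,\ldots,z_d}=\binom{mh'}{m\mu_1,\ldots,m\mu_l}$ and $P^{(z,-1)}_{d,(n-h)/2}=P^{(m\mu,-1)}_{d,(n-mh')/2}$ with $m\mu$ understood as padded to length $d$; moreover $q=2d-1$ is unchanged, so the evaluation point $\tfrac{2d-3}{2d-1}$ persists. Collecting these substitutions in \eqref{for:PGTforDT} yields \eqref{for:PGTforNDT} verbatim. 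The only place where care is needed is the multiplicity count above; the remaining manipulations are pure bookkeeping.
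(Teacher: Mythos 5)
Your proposal is correct and follows exactly the route the paper intends: the paper states Theorem~\ref{thm:main2} with no written proof (just a \qed), treating it as the specialization of Theorem~\ref{thm:main} to $M=(m,\ldots,m)$. Your identification of $P^{(d)}_M(mh')$ with partitions $\mu\vdash h'$ of length $\le d$ and the count $m^{(d)}_M(m\mu)=2^l\binom{d}{l}u(\mu)$ correctly supply the details the paper leaves implicit.
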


\begin{example}
 Consider the case $d=1$.
 Let us check the trivial result
\[
 N^{(1)}_{m}(n)
=
\begin{cases}
 0 & m\nmid n,\\
 2m & m\,|\,n
\end{cases}
\]
 from our formula. 
 First, we have  $m(\mu)=1$ if $\mu=0$ (the empty partition) and $2$ otherwise.
 Moreover, from \eqref{for:Jacobi-1}, it holds that   
\begin{align*}
 X^{(1)}_{m,h}(n;\mu)
&=
\begin{cases}
1 & m\,|\,n \ \ \text{and} \ \ h=\frac{n}{m}, \\
0 & \text{otherwise}.
\end{cases}
\end{align*}
 Hence, from \eqref{for:PGTforNDT}, one can actually obtain the desired formula. 
\end{example}

\begin{example}
 We next consider the case $d=2$.
 It holds that  
\begin{align*}
 N^{(2)}_{m}(n) 
&=m^2\sum_{0\le h\le \frac{n}{m} \atop mh\equiv n \!\!\!\!\! \pmod{2}}\sum_{\mu\vdash h \atop l(\mu)\le 2}
m(\mu)X^{(2)}_{m,h}(n;\mu)\\
&=m^2\left(\delta^{\mathrm{e}}_n X^{(2)}_{m,0}(n;0)+4\sum_{1\le h\le \frac{n}{m} \atop mh\equiv n \!\!\!\!\! \pmod{2}}
\left\{X^{(2)}_{m,h}\bigl(n;(h)\bigr)+\sum_{\mu\vdash h \atop l(\mu)=2}u(\mu)X^{(2)}_{m,h}(n;\mu)\right\}\right).
\end{align*}
 Here, $\delta^{\mathrm{e}}_n=1$ if $n$ is even and $0$ otherwise. 
 Notice that, for $\mu=(\mu_1,\mu_2)\vdash h$ with $l(\mu)=2$, $u(\mu)=1$ if $\mu_1=\mu_2$ and $2$ otherwise.
 Using \eqref{for:geneJacobi1}, we have 
\[
 X^{(2)}_{m,0}(n;0)
=2+2(-3)^{\frac{n}{2}}
{}_3F_{2}\left(
\begin{array}{c}
 -\frac{n}{2},\frac{n}{2},\frac{1}{2}\\[3pt] 
 1,1
\end{array}
;\,\frac{4}{3}
\right).
\]
 Moreover, for $h\ge 1$ and $\mu=(\mu_1,\mu_2)\vdash h$ with $l(\mu)=2$, letting $k=\frac{n-mh}{2}$, we have
\begin{align*}
 X^{(2)}_{m,h}(n;(h))
&=\frac{4n(-3)^{k}}{n+mh}\frac{(mh+1)_k}{k!}
{}_4F_{3}\left(
\begin{array}{c}
 -k,k+mh,\frac{mh}{2}+\frac{1}{2},\frac{mh}{2}+1\\[3pt] 
 mh+1,mh+1,1
\end{array}
;\,\frac{4}{3}
\right),\\
 X^{(2)}_{m,h}\bigl(n;(\mu_1,\mu_2)\bigr)
&=\frac{4n(-3)^{k}}{n+mh}\binom{mh}{m\mu_1,m\mu_2}\frac{(mh+1)_k}{k!}
{}_4F_{3}\left(
\begin{array}{c}
 -k,k+mh,\frac{mh}{2}+\frac{1}{2},\frac{mh}{2}+1\\[3pt] 
 m\mu_1+1,m\mu_2+1,mh+1
\end{array}
;\,\frac{4}{3}
\right).
\end{align*}
 For example,
 let us consider the case $m=3$ and $n=6$.
 Since  
\[
 N^{(2)}_{3}(6) 
=9\left(
X^{(2)}_{3,0}(6;0)
+4X^{(2)}_{3,2}\bigl(6;(2)\bigr)
+4X^{(2)}_{3,2}\bigl(6;(1,1)\bigr)
\right)
\]
 with 
\begin{align*}
 X^{(2)}_{3,0}(6;0)
&=2+2(-27)
{}_3F_{2}\left(
\begin{array}{c}
 -3,3,\frac{1}{2}\\[3pt] 
 1,1
\end{array}
;\,\frac{4}{3}
\right)
=2+2(-27)\Bigl(-\frac{11}{27}\Bigr)
=24, \\
 X^{(2)}_{3,2}\bigl(6;(2)\bigr)
&={}_4F_{3}\left(
\begin{array}{c}
 0,6,\frac{7}{2},4\\[3pt] 
 7,7,1
\end{array}
;\,\frac{4}{3}
\right)
=1,\\
 X^{(2)}_{3,2}\bigl(6;(1,1)\bigr)
&=\binom{6}{3}
{}_4F_{3}\left(
\begin{array}{c}
 0,6,\frac{7}{2},4\\[3pt] 
 4,4,7
\end{array}
;\,\frac{4}{3}
\right)
=20,
\end{align*}
 we have 
\[
 N^{(2)}_{3}(6) 
=9\bigl(1\cdot 24+4\cdot 1+4\cdot 20\bigr)
=9\cdot 108
=972.
\]
 For the other values of $X^{(2)}_{3,h}(n;\mu)$ with $n\le 10$, see the table below.

\begin{table}[!h]
\noindent
{\small
\begin{center}
\renewcommand{\arraystretch}{1.3}
\begin{tabular}{c||c|c|c|c||c}
 $n$  & $h=0$ & $h=1$ & $h=2$ & $h=3$ & $\frac{N^{(2)}_3(n)}{3^2}$  \\
\hline 
\hline 
 $3$ & & $X(1)=1$ & & & 4 \\
\hline
 $4$ & $X(0)=8$ & & & & 8 \\
\hline
 $5$ & & $X(1)=10$ & & & 40 \\
\hline
 $6$ & $X(0)=24$ & & $X(2)=1$, $X(1^2)=20$ & & 108 \\
\hline
 $7$ & & $X(1)=42$ & & & 168 \\
\hline
 $8$ & $X(0)=216$ & & $X(2)=40$, $X(1^2)=80$ & & 696 \\
\hline
 $9$ & & $X(1)=414$ & & $X(3)=1$, $X(21)=84$  & 2332 \\
\hline
 $10$ & $X(0)=1520$ & & $X(2)=420$, $X(1^2)=840$ & & 6560 
\end{tabular}
\end{center}
}
\caption{The values of $X(\mu)=X^{(2)}_{3,h}(n;\mu)$}
\end{table}
\end{example}

\subsection{Some observations}

 From the table above, we first expect the following.
 
\begin{conj}
\label{conj:Xinteger}
 It holds that 
 $X^{(d)}_{m,h}(n;\mu)\in\mathbb{Z}_{\ge 0}$.
\end{conj}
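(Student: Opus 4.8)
The plan is to prove Conjecture~\ref{conj:Xinteger} by equipping $X^{(d)}_{m,h}(n;\mu)$ with a combinatorial meaning, as the cardinality of an explicit set of walks; non-negativity and integrality then both follow at once. The starting point is the identity $N^{(d)}_m(n)=\mathrm{tr}(W^n)$ underlying \eqref{for:PGTforN}, together with the vertex-transitivity of $\DT^{(d)}_m$. Since the edge adjacency matrix $W$ commutes with the translation action of $\prod^{d}_{j=1}\mathbb{Z}/m\mathbb{Z}$, the sum of the diagonal entries $(W^n)_{e,e}$ over the directed edges $e$ issuing from a single vertex is independent of that vertex. Grouping the cyclically reduced closed walks counted by $\mathrm{tr}(W^n)$ according to their starting vertex therefore gives $N^{(d)}_m(n)=m^d R(n)$, where $R(n)$ is the number of cyclically reduced closed walks of length $n$ based at $o=(0,\dots,0)$ (closed non-backtracking walks whose wrap-around step is also non-backtracking). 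Comparing with Theorem~\ref{thm:main2}, it suffices to prove the refined identity $R(n)=\sum_{\mu}m(\mu)\,X^{(d)}_{m,h}(n;\mu)$ term by term.

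First I would decompose $R(n)$ by homology class. The universal cover of $\DT^{(d)}_m$ is the grid $\mathbb{Z}^d$, and every closed walk based at $o$ lifts uniquely to a walk on $\mathbb{Z}^d$ starting at $0$ and ending at a lattice point $w\in(m\mathbb{Z})^d$; this endpoint $w$ is precisely the winding class, and cyclic reducedness lifts to non-backtracking together with a wrap-around constraint relating the first and last steps through the translation by $w$. Writing $C_{o,w}(n)$ for the number of such walks with fixed endpoint $w$, we have $R(n)=\sum_{w\in(m\mathbb{Z})^d}C_{o,w}(n)$. The hyperoctahedral group $W=\mathfrak{S}_d\ltimes(\mathbb{Z}/2\mathbb{Z})^d$ of \S\ref{subsec:Heat_kernel} acts on $\mathbb{Z}^d$ fixing $0$, preserves the sublattice $(m\mathbb{Z})^d$, and induces automorphisms of $\DT^{(d)}_m$ fixing $o$; hence $C_{o,w}(n)$ depends only on the $W$-orbit of $w$, that is, only on the partition $\mu$ with $\{|w_1|/m,\dots,|w_d|/m\}=\mu$ as a multiset. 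A direct count (the same one giving \eqref{def:multiplicity}) shows that $m(\mu)$ is exactly the number of $w\in(m\mathbb{Z})^d$ of profile $\mu$, so $R(n)=\sum_\mu m(\mu)\,C_{o,w_\mu}(n)$ for any chosen representative $w_\mu$. It therefore remains only to prove the identity $X^{(d)}_{m,h}(n;\mu)=C_{o,w_\mu}(n)$, whose right-hand side is manifestly a non-negative integer.

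To establish this last identity I would revisit, in winding-resolved form, the two computations of $\zeta^{(d)}_M(s)$. On the heat-kernel side the periodization leading to \eqref{for:spectral_HK} is already organized by winding, and the single-$w$ building block $G^{(d)}_z(u)$ of \eqref{for:G} is exactly what produces $X^{(d)}_{m,h}(n;\mu)$. The task is to identify $\sum_n C_{o,w}(n)u^n$ with this same per-$w$ contribution, and I would do so by Fourier analysis on $\mathbb{Z}^d$. In a single Fourier mode $\theta\in[0,1)^d$ the adjacency symbol is $a(\theta)=2\sum_{k}\cos(2\pi\theta_k)$, and the cyclically reduced closed-walk generating function in that mode is governed by the local Ihara factor $1-a(\theta)u+(2d-1)u^2$, which is precisely the denominator produced by the substitution \eqref{for:su2} (indeed $(\lambda_v+s)u=1-a_v u+(2d-1)u^2$, matching $\tfrac{u}{1-a_vu+(2d-1)u^2}=\tfrac{1}{\lambda_v+s}$). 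Taking the inverse Fourier transform against $e^{-2\pi i(w,\theta)}$ and expanding in $I$-Bessel functions via \eqref{def:I_Bessel} should reproduce $G^{(d)}_z(u)$, together with the constant correction $2(d-1)\delta_{h,0}$ that localizes the trivial-eigenvalue part at zero winding; matching powers of $u$ then yields $X^{(d)}_{m,h}(n;\mu)=C_{o,w_\mu}(n)$.

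The hard part will be the winding-resolved non-backtracking formula itself. One must prove a Bass--Hashimoto-type identity on the infinite graph $\mathbb{Z}^d$ expressing the cyclically reduced closed-walk generating function, resolved by endpoint, as the Fourier transform of $u\bigl(1-a(\theta)u+(2d-1)u^2\bigr)^{-1}$; in doing so one has to handle correctly both the tailless/wrap-around condition (which distinguishes cyclically reduced closed walks from arbitrary non-backtracking walks, and is the reason the relevant object is a \emph{cyclic} trace rather than an open resolvent entry) and the infinite-multiplicity trivial $\pm1$ eigenvalues of $W$ on $\mathbb{Z}^d$, which are the genuine source of the $2(d-1)\delta_{h,0}$ term and require a careful passage to the limit from the finite quotient. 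Once this identity is secured, the conjecture follows: the $W$-orbit count shows simultaneously that each $X^{(d)}_{m,h}(n;\mu)$ equals a cardinality $C_{o,w_\mu}(n)$, hence lies in $\mathbb{Z}_{\ge 0}$.
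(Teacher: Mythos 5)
The statement you are addressing is left as a conjecture in the paper: there is no proof there to compare against, only numerical evidence for small cases. Your overall strategy --- endow $X^{(d)}_{m,h}(n;\mu)$ with a combinatorial meaning as the number of cyclically reduced closed walks based at $o$ whose lift to $\mathbb{Z}^d$ ends at a point of profile $\mu$ --- is precisely the paper's own Conjecture~\ref{conj:XN}, and the paper already notes that Conjecture~\ref{conj:Xinteger} would follow from it. Your preliminary reductions are sound: vertex transitivity gives $N^{(d)}_m(n)=m^dR(n)$; the lift to the universal cover decomposes $R(n)$ by winding class; and the hyperoctahedral symmetry together with the orbit count behind \eqref{def:multiplicity} shows the per-class count depends only on $\mu$ and occurs with multiplicity $m(\mu)$. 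This recovers the paper's decomposition $N^{(d)}_m(n)=m^d\sum_\mu m(\mu)N^{(d)}_{m,h}(n;\mu)$. But having two decompositions of the same quantity over the same index set does not give termwise equality, and the termwise equality is the entire content of the conjecture.

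The genuine gap is the step you yourself label ``the hard part'': a winding-resolved Bass--Hashimoto identity on $\mathbb{Z}^d$ identifying $\sum_n C_{o,w}(n)u^n$ with the per-$w$ heat-kernel contribution built from $G^{(d)}_z(u)$ in \eqref{for:G} together with the correction $2(d-1)\delta_{h,0}$. The paper's proof of Theorem~\ref{thm:main} uses the determinant identity \eqref{for:detexpression_for_Ihara_A} only globally, after summing over all winding classes; the factor $(1-u^2)^{|E|-|V|}$ and the tail/wrap-around condition that separates cyclically reduced closed walks from arbitrary non-backtracking closed walks are global corrections, and nothing in your sketch establishes that they localize entirely in the trivial winding class as the term $2(d-1)\delta_{h,0}$ would require. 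Likewise, the claim that the inverse Fourier transform of $u\bigl(1-a(\theta)u+(2d-1)u^2\bigr)^{-1}$ against $e^{-2\pi i(w,\theta)}$ counts \emph{cyclically reduced} closed walks of winding $w$ (rather than some resolvent-type quantity differing from it by lower-order walk counts) is asserted, not proved. Until that per-winding-class identity is secured, your argument yields only the aggregate formula \eqref{for:PGTforNDT}, which is already Theorem~\ref{thm:main2}. The proposal is a reasonable and well-motivated programme --- essentially the one the paper itself suggests --- but it is not a proof, and the conjecture remains open at exactly the point you defer.
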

 
 If Conjecture~\ref{conj:Xinteger} is true,
 then we can expect that the summand $X^{(d)}_{m,h}(n;\mu)$ in the righthand side of \eqref{for:PGTforNDT} itself
 counts something special type of cycles in $\DT^{(d)}_{m}$, as we will see below:
 Let us subdivide $N^{(d)}_{m}(n)$ into small pieces by the following manner. 
 Fix $o\in V^{(d)}_m$. 
 We notice that there is an one-to-one correspondence between a cycle $C$ in $\DT^{(d)}_{m}$
 starting from and ending to $o$ of length $n$ and a path $\overline{C}$ in $\mathbb{Z}^d$
 starting from the origin $(0,\ldots,0)$ and ending to $(mp_1,\ldots,mp_d)$
 for some $(p_1,\ldots,p_d)\in\mathbb{Z}^d$ of length $n$. 
 Let us call a path $\overline{C}$ in $\mathbb{Z}^d$ {\it reduced modulo $m$}
 if the corresponding cycle $C$ in $\DT^{(d)}_{m}$ is reduced.
 Let $\mathrm{RC}^{(d)}_m(n)$ be the set of all reduced cycles in $\DT^{(d)}_{m}$ starting from and ending to $o$ of length $n$
 and, for $p=(p_1,\ldots,p_d)\in\mathbb{Z}^d$, $\overline{\mathrm{RP}}^{(d)}_{m}(n;p)$
 the set of all reduced paths modulo $m$ in $\mathbb{Z}^d$
 starting from the origin $(0,\ldots,0)$ and ending to $(mp_1,\ldots,mp_d)$ of length $n$. 
 It is clear that 
\begin{align}
 N^{(d)}_{m}(n)
&=m^d\#\mathrm{RC}^{(d)}_m(n)\nonumber\\
&=m^d\sum_{p\in\mathbb{Z}^d}\#\overline{\mathrm{RP}}^{(d)}_{m}\bigl(n;p\bigr)\nonumber\\
\label{for:subdivision_of_N}
&=m^d\sum_{0\le h\le \frac{n}{m} \atop mh\equiv n \!\!\!\!\! \pmod{2}}\sum_{\mu\vdash h \atop l(\mu)\le d}
m(\mu)N^{(d)}_{m,h}(n;\mu),
\end{align}
 where, for $\mu=(\mu_1,\ldots,\mu_d)\vdash h$ of length $l(\mu)=l\le d$, $m(\mu)$ is defined in \eqref{def:multiplicity} and 
 $N^{(d)}_{m,h}(n;\mu)$ is the number of all reduced paths modulo $m$ in $\mathbb{Z}^d$
 starting from the origin $(0,\ldots,0)$ and ending to $(m\mu_1,\ldots,m\mu_d)$ of length $n$. 
 Notice that $m^d$ represents the number of choices of the starting points.
 Now, it is natural from \eqref{for:PGTforNDT} and \eqref{for:subdivision_of_N} to expect the following.
 
\begin{conj}
\label{conj:XN}
 It holds that 
 $X^{(d)}_{m,h}(n;\mu)=N^{(d)}_{m,h}(n;\mu)$.
\end{conj}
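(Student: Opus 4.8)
The plan is to prove the stronger, winding-resolved statement at the level of generating functions in $u$: writing $z=m\mu$ for a representative endpoint, I would show
\[
\sum_{n} N^{(d)}_{m,h}(n;\mu)\,u^{n}=\sum_{n} X^{(d)}_{m,h}(n;\mu)\,u^{n}.
\]
Since the spectral decomposition of $N^{(d)}_m(n)$ from Theorem~\ref{thm:main2} and the combinatorial one from the reduced-path subdivision already agree after summing over all $(h,\mu)$, only the term-by-term identity (one winding class $\mu$ at a time) remains. Reading off the derivation of Theorem~\ref{thm:main}, the right-hand side above is, up to the first-term contributions isolated in \eqref{for:FirstTerm}, exactly the single-$z$ summand $\tfrac{1-(2d-1)u^2}{u}\,G^{(d)}_{z}(u)$ produced in \eqref{for:logderiIhara} via \eqref{for:spectral_HK} and \eqref{for:G}. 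Thus the task reduces to identifying $\sum_n N^{(d)}_{m,h}(n;\mu)u^n$ with this explicit function of $u$.

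Second, I would compute the endpoint-resolved count of (not necessarily reduced) non-backtracking walks on the cover $\mathbb{Z}^d$. For a $(q+1)$-regular graph the non-backtracking walk matrices $A_n$ satisfy the standard identity $\sum_{n\ge0}A_nu^n=(1-u^2)\bigl(I-uA+qu^2I\bigr)^{-1}$ (as matrix power series); applying this to $\mathbb{Z}^d$ with $q=2d-1$ and $A=2dI-\Delta$, and using $I-uA+qu^2I=u(\Delta+sI)$ with $s+2d=\tfrac{1+(2d-1)u^2}{u}$ as in \eqref{for:su2}, the $(o,z)$-entry becomes
\[
\sum_{n}A_n(o,z)\,u^{n}=\frac{1-u^2}{u}\,G^{(d)}_{z}(u),
\]
because $(\Delta+sI)^{-1}(o,z)=\int_0^\infty e^{-(s+2d)t}\prod_{j}I_{z_j}(2t)\,dt=F^{(d)}_z(s+2d)=G^{(d)}_z(u)$ by \eqref{def:I_Bessel}. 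This already reproduces the Bessel/hypergeometric structure of $X$, and for $d=1$, where $1-(2d-1)u^2=1-u^2$, it coincides with the $X$-generating function on the nose, in agreement with Example~\ref{ex:d=1}.

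Third, and this is the crux, I must pass from non-backtracking walks $o\to z$ to \emph{reduced} (cyclically non-backtracking, i.e.\ tailless) closed walks of winding $\mu$, which is what $N^{(d)}_{m,h}(n;\mu)$ counts. The discrepancy between the factor $1-u^2$ above and the factor $1-(2d-1)u^2$ demanded by the first paragraph, together with the constant pieces \eqref{for:FirstTerm}, is precisely the contribution of the tailless condition at the base vertex. To make this winding-resolved I would twist the edge-adjacency operator $W$ of $\DT^{(d)}_m$ by the characters $\theta\in(\mathbb{R}/\mathbb{Z})^d$ of the deck group $\mathbb{Z}^d$ of the cover $\mathbb{Z}^d\to\DT^{(d)}_m$, so that $\mathrm{tr}(W_\theta^n)$ is the winding-weighted count of reduced cycles and $N^{(d)}_{m,h}(n;\mu)$ is recovered as the Fourier coefficient $\int_{(\mathbb{R}/\mathbb{Z})^d}e^{-2\pi i\langle\theta,p\rangle}\mathrm{tr}(W_\theta^n)\,d\theta$ for $z=mp$. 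A twisted version of the Ihara determinant \eqref{for:detexpression_for_Ihara_A} then expresses $\sum_n\mathrm{tr}(W_\theta^n)u^n$ through $\det(I-uA_\theta+qu^2I)$, i.e.\ through the twisted Laplacian resolvent whose heat kernel is the $\theta$-periodization of $e^{-2dt}\prod_jI_{x_j}(2t)$; integrating against $e^{-2\pi i\langle\theta,p\rangle}$ selects $K_{\mathbb{Z}^d}(mp,t)$ and returns $G^{(d)}_z(u)$ with $z=mp$, while the $(1-u^2)^{|E|-|V|}$ prefactor and the logarithmic derivative should supply exactly the factor $\tfrac{1-(2d-1)u^2}{u}$ and the terms \eqref{for:FirstTerm}.

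The main obstacle is this last matching. The untwisted Ihara formula is what Theorem~\ref{thm:main} already uses, so it controls only the \emph{aggregate} over windings; the essential new point is to commute the $\theta$-integration with the logarithmic derivative of $\det(I-uA_\theta+qu^2I)$ and to show that the off-diagonal terms of this determinant, which couple distinct windings, integrate to exactly the tailless correction, with no leakage between different classes $\mu$. Establishing this winding-by-winding factorization---equivalently, proving directly that cyclic reduction at the base vertex turns the open non-backtracking generating function $\tfrac{1-u^2}{u}G^{(d)}_z$ into $\tfrac{1-(2d-1)u^2}{u}G^{(d)}_z$ plus \eqref{for:FirstTerm} for each fixed $z$---is the heart of the matter. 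Once it is in place, Conjecture~\ref{conj:Xinteger} follows immediately, since $N^{(d)}_{m,h}(n;\mu)$ is manifestly a nonnegative integer.
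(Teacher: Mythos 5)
This statement is left as an open conjecture in the paper: the author offers no proof, only numerical verification (the table and figures for $d=2$, $m=3$, $n\le 10$). So there is no argument of the paper to compare yours against; your proposal attempts something genuinely beyond the text. Your two reductions are correct: the generating function $\sum_n X^{(d)}_{m,h}(n;\mu)u^n$ is indeed the single-$z$ summand $\frac{1-(2d-1)u^2}{u}G^{(d)}_{m\mu}(u)$ plus the $\mu=0$ corrections from \eqref{for:FirstTerm}, and on $\mathbb{Z}^d$ the non-backtracking walk identity $\sum_n A_n u^n=(1-u^2)(I-uA+qu^2I)^{-1}$ together with $I-uA+qu^2I=u(\Delta+sI)$ and the heat-kernel computation gives $\sum_n A_n(o,z)u^n=\frac{1-u^2}{u}G^{(d)}_z(u)$. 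But as written this is a strategy, not a proof: you yourself defer the only nontrivial step (passing from open non-backtracking walks to tailless closed walks, winding class by winding class), so the argument has an acknowledged hole exactly where the conjecture lives.

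That said, the obstacle you flag is more tractable than you make it sound, and I would push you to finish it rather than stop there. The winding-resolved Bass identity is precisely the Artin--Ihara $L$-function formula $\det(I-uW_\theta)=(1-u^2)^{|E|-|V|}\det\bigl(I-uA_\theta+qu^2I\bigr)$ for a unitary character $\chi_\theta$ of the deck group $\mathbb{Z}^d$ of $\mathbb{Z}^d\to\DT^{(d)}_m$; it holds verbatim for each fixed $\theta$, and there is no ``leakage'' between winding classes because removing a tail from a based reduced cycle does not change its homology class, so the tail-cancellation underlying Bass's theorem already respects the winding decomposition. Taking $u\frac{d}{du}\log$ of both sides gives
\begin{equation*}
\sum_{n\ge 1}\mathrm{tr}(W_\theta^n)\,u^n
=-m^d+(d-1)m^d\,\frac{2u^2}{1-u^2}+\frac{1-(2d-1)u^2}{u}\,\mathrm{tr}\bigl(\Delta_\theta+sI\bigr)^{-1},
\end{equation*}
and the first two terms are $\theta$-independent, so under Fourier inversion in $\theta$ they contribute only to $\mu=0$ and reproduce exactly \eqref{for:FirstTerm}, while $\mathrm{tr}(\Delta_\theta+sI)^{-1}$ is the $\theta$-twisted periodization of the $\mathbb{Z}^d$-resolvent, whose Fourier coefficient at winding $p$ is $m^d G^{(d)}_{mp}(u)$ by your second paragraph. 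What remains for a complete proof is to set up the twisted edge operator $W_\theta$ carefully (choice of section, verification that $\mathrm{tr}(W_\theta^n)$ is the winding-weighted count of reduced cycles), to quote or prove the twisted determinant formula, and to justify interchanging the $\theta$-integration with the power-series expansion in $u$ (harmless for small $|u|$ since everything is a finite-dimensional determinant for each $\theta$). Until those steps are written down, the conjecture is not proved by your proposal.
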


 It is clear that Conjecture~\ref{conj:Xinteger} follows from Conjecture~\ref{conj:XN}
 because $N^{(d)}_{m,h}(n;\mu)\in\mathbb{Z}_{\ge 0}$. 
 
\begin{example}
\label{ex:d2m3n6}
 The following figures~1, 2 and 3 support that Conjecture~\ref{conj:XN} is true for the case $d=2$, $m=3$ and $n=6$,
 that is,
 $N^{(2)}_{3,0}(6;(0))=X^{(2)}_{3,0}(6;(0))=24$,
 $N^{(2)}_{3,2}(6;(2))=X^{(2)}_{3,2}(6;(2))=1$ and
 $N^{(2)}_{3,2}(6;(1,1))=X^{(2)}_{3,2}(6;(1,1))=20$.
 Here, the black dots in the figures represent the lattices points.
 In particular, the big black dots denote points of the form of $(mp_1,mp_2)$ for some $(p_1,p_2)\in\mathbb{Z}^2$. 
 By the same manner, we have already checked that the equation $X^{(d)}_{m,h}(n;\mu)=N^{(d)}_{m,h}(n;\mu)$ holds for $n\le 10$.
\begin{figure}[!h]
\begin{center}
 \includegraphics[clip,width=140mm]{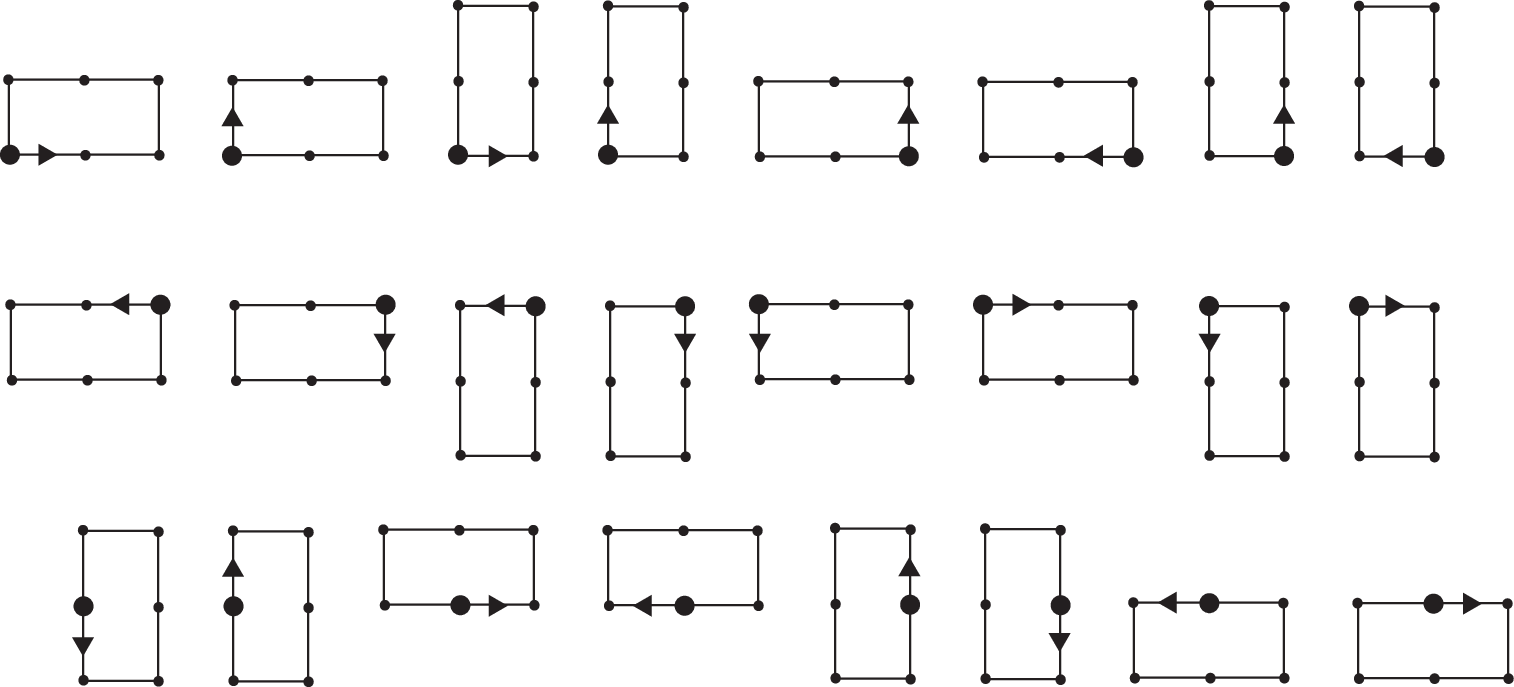}
\caption{$N^{(2)}_{3,0}(6;(0))=24$;
 it is the number of all reduced paths $\overline{C}$ modulo $3$ in $\mathbb{Z}^2$
 starting from $(0,0)$ and ending to $(0,0)$ of length $6$.}
\end{center}
\begin{center}
 \includegraphics[clip,width=50mm]{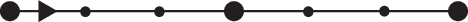}
\caption{$N^{(2)}_{3,2}(6;(2))=1$;
 it is the number of all reduced paths $\overline{C}$ modulo $3$ in $\mathbb{Z}^2$
 starting from $(0,0)$ and ending to $(6,0)$ of length $6$.}
\end{center}
\begin{center}
 \includegraphics[clip,width=150mm]{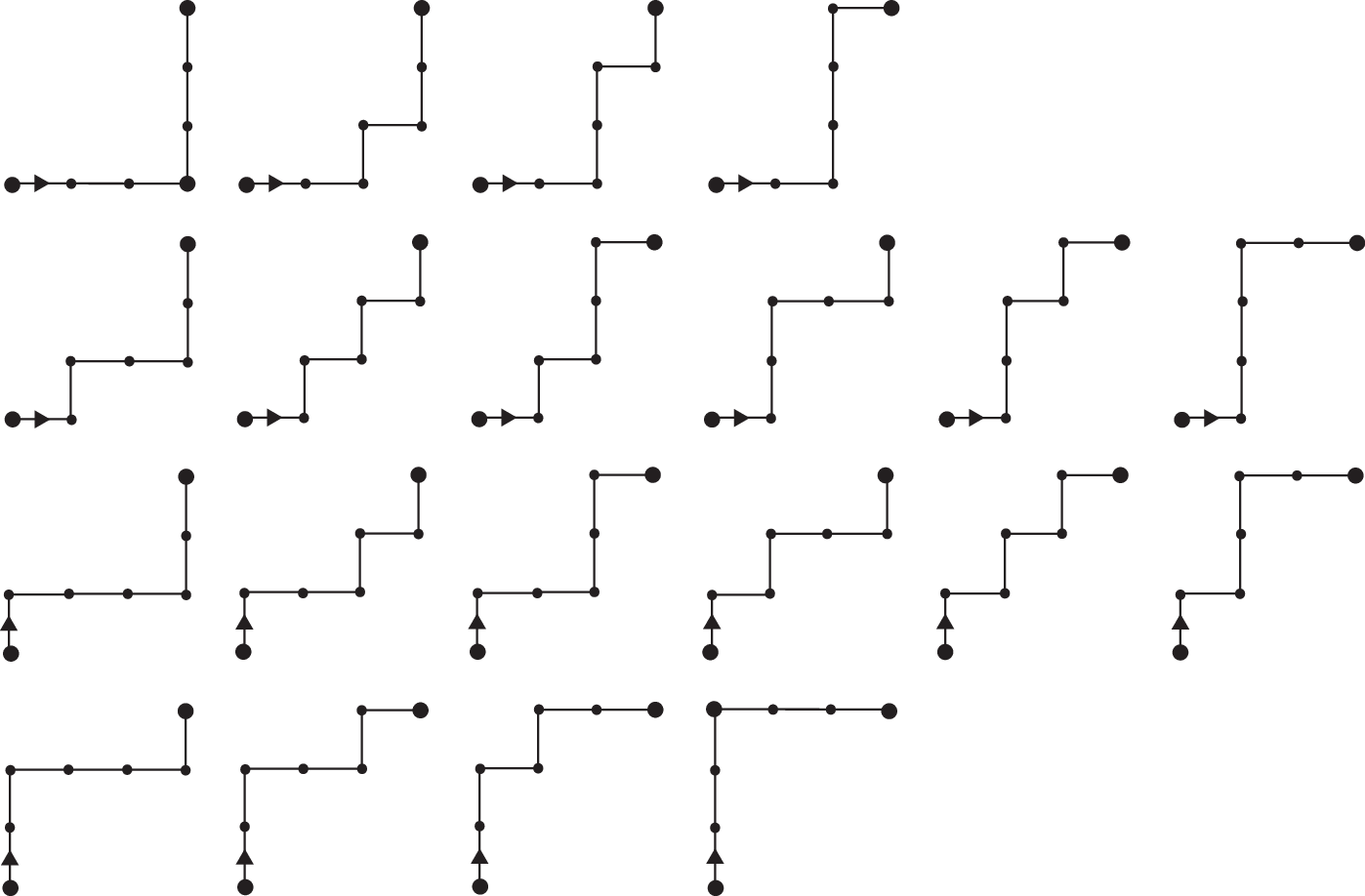}
\caption{$N^{(2)}_{3,2}(6;(1,1))=20$;
 it is the number of all reduced paths $\overline{C}$ modulo $3$ in $\mathbb{Z}^2$
 starting from $(0,0)$ and ending to $(3,3)$ of length $6$.}
\end{center}
\end{figure}
\end{example}

 Let $M$ be a compact Riemannian manifold of negative curvature.
 It is known that there exist countably infinitely many closed geodesics in $M$ and 
 that a closed geodesic in $M$ corresponds to
 a unique non-trivial conjugacy class $\mathrm{Conj}(\gamma)$ of $\gamma\in\pi_1(M)$
 where $\pi_1(M)$ is the fundamental group of $M$.
 Let us write the closed geodesic corresponding to $\mathrm{Conj}(\gamma)$ as $C_{\gamma}$.
 Let $N_M(x)$ be the number of all closed geodesics in $M$ of length $\le x$.
 Then, we have the following prime geodesic theorem for $M$ (\cite{Margulis1969});
\[
 N_M(x)\sim \frac{e^{hx}}{hx} \quad (x\to\infty),
\]
 where $h>0$ is the topological entropy of the geodesic flow over $M$.
 This is an analogue of the classical prime number theorem.
 Moreover, we have also an analogue of the Dirichlet theorem on arithmetic progressions,
 which counts the closed geodesics lying in a fixed homology class:
 Let $H_1(M,\mathbb{Z})$ be the first homology group over $\mathbb{Z}$ of $M$ and
 $\phi:\pi_1(M)\to H_1(M,\mathbb{Z})=\pi_1(M)^{\mathrm{ab}}=\pi_1(M)/[\pi_1(M),\pi_1(M)]$ be the natural projection. 
 Here, $[\pi_1(M),\pi_1(M)]$ is the commutant subgroup of $\pi_1(M)$.
 For a fixed $\alpha\in H_1(M,\mathbb{Z})$,
 let $N_M(x;\alpha)$ be the number of all closed geodesics $C=C_{\gamma}$ in $M$ of length $\le x$
 satisfying $\phi(\gamma)=\alpha$.
 Then, it is shown in \cite{AdachiSunada1987,PhillipsSarnak1987,Lalley1989} that
 there exists a constant $C>0$, not depending on $\alpha$, such that 
\begin{equation}
\label{for:PGThomology}
 N_M(x;\alpha)\sim C\frac{e^{hx}}{x^{\frac{b}{2}+1}} \quad (x\to\infty).
\end{equation} 
 Here, $b\in\mathbb{Z}_{\ge 0}$ is the first Betti number of $M$, that is, the rank of $H_1(M,\mathbb{Z})$.

 Now, one may regard the claim in Conjecture~\ref{conj:XN}
 as a graph analogue of \eqref{for:PGThomology}
 because the partition $\mu\vdash h$ of length at most $d$ appeared in $X^{(d)}_{m,h}(n;\mu)$
 can be regarded as an element of $\mathbb{Z}^d\simeq H_1(\RT^{(d)},\mathbb{Z})\,(\simeq \pi_1(\RT^{(d)}))$
 with $\RT^{(d)}$ being the real torus of dimension $d$.
 Let us see this in the situation of Example~\ref{ex:d2m3n6}.
 Write $H_1(\RT^{(d)},\mathbb{Z})=\mathbb{Z}\alpha+\mathbb{Z}\beta$
 with the standard basis $\alpha,\beta$ respectively corresponding to the meridian and longitude of the torus.
 Then, the conjecture says that
 $X^{(2)}_{3,0}(6;(0))$, $X^{(2)}_{3,2}(6;(2))$ and $X^{(2)}_{3,2}(6;(1,1))$
 count cycles in $\DT^{(2)}_3$ lying in homology classes $0$ (the null homology class),
 $2\alpha$ (or $2\beta$ by the symmetry) and $\alpha+\beta$, respectively. 

\section*{Acknowledgment} 

 The author would like to thank Professor Hiroyuki Ochiai
 for carefully reading the manuscript and giving many helpful comments. 
 Moreover, the author also thanks the referee for several useful and variable comments which improve the paper.
  

\noindent
\textsc{Yoshinori YAMASAKI} \\
 Graduate School of Science and Engineering, Ehime University, \\
 Bunkyo-cho, Matsuyama, 790-8577 JAPAN. \\
 \texttt{yamasaki@math.sci.ehime-u.ac.jp}


\begin{thebibliography}{999999}
\bibitem[AS]{AdachiSunada1987}
 T. Adachi and T. Sunada,
 Homology of closed geodesics in a negatively curved manifold,
 {\it J. Diff. Geom.}, {\bf 26} (1987), 81--99.
\bibitem[CJK1]{ChintaJorgensonKarlsson2010}
 G. Chinta, J. Jorgenson and A. Karlsson,
 Zeta functions, heat kernels, and spectral asymptotics on degenerating families of discrete tori,
 {\it Nagoya Math. J.}, {\bf 198} (2010), 121--172.
\bibitem[CJK2]{ChintaJorgensonKarlsson2012}
 G. Chinta, J. Jorgenson and A. Karlsson,
 Complexity and heights of tori, Dynamical systems and group actions, 89--98,
 {\it Contemp. Math.}, {\bf 567}, Amer. Math. Soc., Providence, RI, 2012.
\bibitem[CJK3]{ChintaJorgensonKarlsson2015}
 G. Chinta, J. Jorgenson and A. Karlsson,
 Heat kernels on regular graphs and generalized Ihara zeta function formulas,
 {\it Monatsh. Math.}, {\bf 178} (2015), 171--190.
\bibitem[C]{Chung1997}
 F. Chung, 
 Spectral graph theory.
 CBMS Regional Conference Series in Mathematics, 92. 
 Published for the Conference Board of the Mathematical Sciences, Washington, DC;
 by the American Mathematical Society, Providence, RI, 1997. 
\bibitem[I]{Ihara1966}
 Y. Ihara,
 On discrete subgroups of the two by two projective linear group over $p$-adic fields,
 {\it J. Math. Soc. Japan}, {\bf 18} (1966), 219--235.
\bibitem[JLG]{JorgensonLangGoldfeld1994}
 J. Jorgenson, S. Lang and D. Goldfeld, 
 Explicit formulas. Lecture Notes in Mathematics, 1593. Springer-Verlag, Berlin, 1994. 
\bibitem[JL]{JorgensonLang2008}
 J. Jorgenson and S. Lang, 
 The heat kernel and theta inversion on $SL_2(\mathbb{C})$.
 Springer Monographs in Mathematics. Springer, New York, 2008. 
\bibitem[K]{Karlsson2012}
 A. Karlsson,
 Applications of heat kernels on abelian groups: $\zeta(2n)$, quadratic reciprocity, Bessel integrals.
 Number theory, analysis and geometry. In memory of Serge Lang, 
 Springer Verlag, 307--320, 2012.
\bibitem[KN]{KarlssonNeuhauser2006}
 A. Karlsson and M. Neuhauser,
 Heat kernels, theta identities, and zeta functions on cyclic groups,
 Topological and asymptotic aspects of group theory, 177--189,
 Contemp. Math., 394, Amer. Math. Soc., Providence, RI, 2006.
\bibitem[KS]{KotaniSunada2000}
 M. Kotani and T. Sunada,
 Zeta functions of finite graphs,
 {\it J. Math. Sci. Univ. Tokyo}, {\bf 7} (2000), 7--25.
\bibitem[La]{Lalley1989}
 S. P. Lalley, 
 Closed geodesics in homology classes on surfaces of variable negative curvature,
 {\it Duke Math. J.}, {\bf 58} (1989), 795--821.
\bibitem[Lo1]{Louis2015a}
 J. Louis,
 Asymptotics for the number of spanning trees in circulant graphs and degenerating $d$-dimensional discrete tori,
 {\it Ann. Comb.}, {\bf 19} (2015), 513--543.
\bibitem[Lo2]{Louis2015b}
 J. Louis,
 A formula for the number of spanning trees in circulant graphs with nonfixed generators and discrete tori,
 {\it Bull. Aust. Math. Soc.}, {\bf 92} (2015), 365--373.
\bibitem[Mar]{Margulis1969}
 G. Margulis, 
 Applications of ergodic theory to the investigation of manifolds of negative curvatures,
 {\it Funkt. Anal. i Ego Pril.}, {\bf 3} (1969), 89--90.
\bibitem[N]{Nagoshi1999}
 H. Nagoshi, 
 Spectra of arithmetic infinite graphs and their application.
 Proceedings of the Workshop on Graph Theory and Related Topics (Sendai, 1999).
 Interdiscip. Inform. Sci. {\bf 7} (2001), no. 1, 67--76.
\bibitem[PS]{PhillipsSarnak1987}
 R. Phillips and P. Sarnak,
 Geodesics in homology classes,
 {\it Duke Math. J.}, {\bf 55} (1987), 287--297.
\bibitem[T]{Terras2011}
 A. Terras,
 Zeta functions of graphs. A stroll through the garden,
 Cambridge Studies in Advanced Mathematics, 128.
 Cambridge University Press, Cambridge, 2011.
\end{thebibliography}
\end{document}